\documentclass[12pt]{amsart}

\usepackage[a4paper]{geometry}
\geometry{scale={0.72,0.74}}

\usepackage{amsfonts}
\usepackage{amsmath}
\usepackage{amsthm}
\usepackage{amssymb}
\usepackage{mathrsfs}
\usepackage{esint}
\usepackage{upgreek}
\usepackage{mathtools, bm}



\usepackage{color}
	\definecolor{red}{rgb}{1,0,0} 
	\definecolor{green}{rgb}{0,1,0} 
	\definecolor{blue}{rgb}{0,0,1} 
	\definecolor{darkblue}{rgb}{0,0,0.6}
	\definecolor{darkred}{rgb}{0.6,0,0}
	
\usepackage[colorlinks,
            linkcolor=blue,
            anchorcolor=darkblue,
            citecolor=darkblue
           ]{hyperref}




\newtheorem{theorem}{Theorem}[section]

\newtheorem{corollary}[theorem]{Corollary}

\newtheorem{lemma}[theorem]{Lemma}

\newtheorem{proposition}[theorem]{Proposition}

\theoremstyle{definition}
\newtheorem{definition}{Definition}

\theoremstyle{remark}
\newtheorem{remark}{Remark}[section]

\numberwithin{equation}{section}


\newcommand{\eps}{\varepsilon}  
  
\newcommand{\vf}{\varphi}
\newcommand{\al}{\alpha}
\newcommand{\be}{\beta}
\newcommand{\ga}{\gamma}
\newcommand{\de}{\delta}
\newcommand{\la}{\lambda}
\newcommand{\si}{\sigma}

\newcommand{\tht}{\theta}
\newcommand{\om}{\omega}

\newcommand{\md}{\mathrm{d}}   
\newcommand{\vd}{\,\md}
\newcommand{\me}{\mathrm{e}}   
\newcommand{\x}{\times}
\newcommand{\pd}{\partial}   

\newcommand{\p}{\ga}  

\newcommand{\R}{\mathbf{R}}   
\newcommand{\Nat}{\mathbf{N}}

\newcommand{\PS}{\Omega}  
\newcommand{\E}{\mathbb{E}}  
\newcommand{\Prob}{\mathbb{P}}  
\newcommand{\Filt}{\mathcal{F}}  
\newcommand{\BM}{w}  



\newcommand{\lap}{\Delta}
\newcommand{\loc}{\mathrm{loc}}

\newcommand{\Dom}{\mathcal{O}}   
\newcommand{\Q}{\mathcal{Q}} 
\newcommand{\norm}[1]{[\hspace{-0.2ex}] #1 [\hspace{-0.2ex}]}
\newcommand{\hnorm}[1]{[\hspace{-0.33ex}[ #1 ]\hspace{-0.33ex}]}

\newcommand{\pdist}[1]{| #1 |_{{\bf p}}}

\newcommand{\Ct}{C}  
\newcommand{\bL}{L_\om}  

\newcommand{\aij}{a^{ij}}   
\newcommand{\sik}{\si^{ik}}

\newcommand{\bi}{{b^{i}}}

\newcommand{\xdij}{{x^{i}x^{j}}}  
\newcommand{\xdi}{{x^{i}}}

\newcommand{\dij}{{ij}}  
\newcommand{\di}{{i}}
\renewcommand{\dj}{{j}}

\newcommand{\Dij}{D_{ij}}  
\newcommand{\Di}{D_{i}}

\newcommand{\pc}{{\lambda}}  

\newcommand{\cL}{L}  
\newcommand{\cM}{\varLambda}
\newcommand{\cm}{\varpi}   
\newcommand{\m}{\upkappa} 





\title[Stochastic parabolic equations]{
On the Cauchy problem for stochastic parabolic equations 
in H\"{o}lder spaces
}

\author{Kai~Du}
\author{Jiakun Liu}

\address
	{
	School of Mathematics and Applied Statistics,
	University of Wollongong,
	Wollongong, NSW 2522, AUSTRALIA}

\email{kaid@uow.edu.au {\rm (Kai Du)}, jiakunl@uow.edu.au {\rm (Jiakun Liu)}}

\date{\today}
\thanks{Research of Liu was supported by the Australian Research Council DE140101366}
\keywords{Stochastic PDE, Schauder estimate, H\"older space}

\begin{document}

\begin{abstract}
In this paper, we establish a sharp $C^{2+\alpha}$-theory for stochastic partial differential equations of parabolic type
in the whole space.

\medskip
\noindent \textsc{AMS Subject Classification:} 35R60; 60H15
\end{abstract}
\maketitle

\section{Introduction}\label{sec1}

In this paper, we consider the Cauchy problem for second-order stochastic partial differential equations (SPDEs) of the It\^o type
\begin{equation}\label{chief}
\md u = (\aij u_\xdij + \bi u_\xdi + c u + f)\vd t + (\sik u_\xdi + \nu^k u + g^k) \vd \BM^k_t,
\end{equation}
where $\{\BM^k\}$ are countable independent standard Wiener processes
defined on a filtered complete probability space $(\PS,\Filt,(\Filt_t)_{t\ge 0},\Prob)$,
the coefficients, free terms and the unknown function $u$ are all 
random fields adapted to the filtration $\Filt_t$ that is complete and right-continuous.
Equation \eqref{chief} has many practical applications such as in probability, engineering, and economics, and has been studied since long ago (see \cite{rozovskii1990stochastic}).
A well-known example of \eqref{chief} is the Zakai equation arising in the nonlinear filtering problem, see \cite{zakai1969,rozovskii1990stochastic,pardoux1991filtrage}.
{Regularity theory for equation \eqref{chief} also play a prominent role in the study of nonlinear stochastic equations, 
see \cite{walsh1986introduction,da1992stochastic,krylov1997spde,chow2014stochastic} and references therein.}

Denote the matrices $a = (\aij)$ and $\si = (\sik)$.
The following {uniform parabolic} condition is assumed throughout the paper:
\begin{equation}\label{parab}
\pc I_n + \si\si^* \le 2a \le \pc^{-1} I_n
\quad\text{ on } 
\R^n\x[0,\infty)\x\PS,
\end{equation}
where $\pc>0$ is a constant, $\si^*$ is the transposed matrix of $\si$, and $I_n$ is the $n\times n$ identity matrix.

A random field $u$ satisfying \eqref{chief} in the sense of 
Schwartz distributions is often called a weak solution of \eqref{chief}.
The regularity of weak solutions in Sobolev spaces has already been investigated by many researchers. 
Various aspects of $L^2$-theory have been obtained since 1970s, see \cite{pardoux1975these, krylov1977cauchy, rozovskii1990stochastic,da1992stochastic} among others.
A complete $L^p$-theory was established by Krylov \cite{krylov1996l_p,krylov1999analytic} in 1990s. 
By Sobolev's embedding, one then has the regularity in some proper $C^{2+\al}$-spaces, which however requires relatively high regularities of the given data.
As an \emph{open} problem {proposed by Krylov} \cite{krylov1999analytic}, 
one desires a sharp $C^{2+\alpha}$-theory in the sense that not only that for $f, g$ belonging to a proper space $\mathcal{X}$,
the solution belongs to some kind of stochastic $C^{2+\alpha}$-spaces, 
but also that every element of this stochastic space can be obtained as a solution for certain $f, g$ belonging to the same $\mathcal{X}$.

\vspace{3pt}
The purpose of this paper is to establish such a sharp $C^{2+\al}$-theory for equation \eqref{chief}.
In order to state our result, we need to define the proper H\"older space $\mathcal{X}$ and introduce a notion of solutions. 
\begin{definition}
A predictable random field $u$ is called a \emph{quasi-classical} solution of \eqref{chief} if

(i) for each $t\in(0,\infty)$, $u(\cdot,t)$ is a twice strongly differentiable function from $\R^n$ to $\bL^{\p} := L^\p(\PS,\Filt; \R)$ for some $\p\geq2$; and 

(ii) for each $x\in\R^n$, the process $u(x,\cdot)$ is stochastically continuous and satisfies the integral equation
\begin{align*}
& u(x,T_1)-u(x,T_0)\\
= &\int_{T_0}^{T_1}\bigl[\aij(x,t) u_\xdij (x,t) + \bi (x,t) u_\xdi (x,t) 
+ c(x,t) u(x,t) + f(x,t) \bigr]\vd t \\
& + \int_{T_0}^{T_1} \bigl[ \sik (x,t) u_\xdi (x,t) + \nu^k (x,t) u(x,t) + g^k (x,t) \bigr] \vd \BM^k_t
\end{align*}
almost surely (a.s.) for all $0\le T_0 < T_1<\infty$.

If furthermore, $u(\cdot,t,\om)\in C^2(\R^n)$ for each $(t,\om)\in(0,\infty)\x\PS$, then $u$ is a {\em classical} solution of \eqref{chief}.
\end{definition}

It is well known that $\bL^\p=L^\p(\PS,\Filt; \R)$ is a Banach space equipped with the norm
$\|\xi\|_{\bL^\p} := (\E |\xi|^{\p})^{1/\p}$,
where $\p \ge 2$ is a constant fixed throughout the paper.
Let $T>0$ and $\Q_T=\R^n\x(0,T)$. 
We define the $\bL^\p$-valued H\"older spaces $\Ct^{m+\al}_x(\Q_T;\bL^\p)$ and $\Ct^{m+\al,\al/2}_{x,t}(\Q_T;\bL^\p)$ as follows, 
where $\beta=(\beta_1,\dots,\beta_n)$ denotes a multi-index and $|\beta|=\beta_1 +\cdots +\beta_n$.

\begin{definition}\label{nota}
For $m \in \Nat := \{0,1,2,\dots\}$ and $\alpha\in(0,1)$, the space $\Ct^{m+\al}_x(\Q_T;\bL^\p)$ consists of {all {predictable} random fields $u : \Q_T\times\Omega \to \R$ such that
$u(\cdot,t)$ is an $\bL^\p$-valued strongly continuous function for each $t$ and}
\begin{equation}\label{norm1}
|u|_{m+\al;\Q_T}^{{\bL^\p}} := |u|_{m;\Q_T}^{{\bL^\p}}
+ \max_{|\be|= m}\sup_{t,\,x\neq y} 
\frac{\|D^\be u(x,t) - D^\be u(y,t)\|_{\bL^\p}}{|x-y|^\al} < \infty,
\end{equation}
where $|u|_{m;\Q_T}^{{\bL^\p}}=\max_{|\be|\le m}\sup_{(x,t)\in\Q_T} \|D^\be u(x,t)\|_{\bL^\p}$, and the derivatives are defined with respect to the spatial variable in the strong sense, see \cite{hille1957functional}.

Using the parabolic module
$\pdist{X} := |x| + \sqrt{|t|}$ for $X=(x,t)\in\R^n\x\R$,
we define $C^{m+\al,\al/2}_{x,t}(\Q_T;\bL^\p)$ to be the set of all $u\in C^{m+\al}_x(\Q_T;\bL^\p)$
such that
\begin{equation}\label{norm2}
|u|_{(m+\al,\al/2);\Q_T}^{{\bL^\p}} := |u|_{m;\Q_T}^{{\bL^\p}} + \max_{|\be|= m}\sup_{X\neq Y} 
\frac{\|D^\be u(X) - D^\be u(Y)\|_{\bL^\p}}{\pdist{X-Y}^\al} < \infty.
\end{equation}
\end{definition}
Similarly, we can define the norms \eqref{norm1} and \eqref{norm2} over a domain $Q=\Dom \x I$, for any domains $\Dom\subset\R^n$ and $I\subset\R$.
See \S\ref{sec2.1} for more general definitions.

\vspace{5pt}
We can now state our main result, while a detailed explanation of the coefficients and free terms has to be postponed to Assumption (H) in \S\ref{sec2.2}.
\begin{theorem}\label{thm1}
Assume that the classical $C^\al_x$-norms of $\aij, \bi,c,\si^i,\si^i_{\!x},\nu,\nu_x$ are all dominated by a constant $K$
uniformly in $(t,\om)\in(0,T)\x\PS$, and the condition \eqref{parab} is satisfied.
If $f\in \Ct^{\al}_x(\Q_T;\bL^\p)$, $g\in\Ct^{1+\al}_x(\Q_T;\bL^\p)$ for some $\p\ge 2$,
then equation~\eqref{chief} with a zero initial condition
admits a unique quasi-classical solution $u$ in $\Ct^{2+\al,\al/2}_{x,t}(\Q_T;\bL^\p)$.
\end{theorem}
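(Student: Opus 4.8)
The plan is to follow the continuity (method of continuity / a priori estimates plus approximation) strategy that is standard for linear parabolic equations, but carried out in the $\bL^\p$-valued Hölder scale so as to handle the stochastic term correctly. First I would treat the model equation, i.e.\ the constant-coefficient case $\md u = a^{ij}u_{x^ix^j}\vd t + (\si^{ik}u_{x^i}+g^k)\vd\BM^k_t$ with $a$ constant (in fact I would first take $\si\equiv0$ to isolate the purely deterministic-in-form heat part, then restore the first-order stochastic term). For this model problem one writes the solution explicitly via the stochastic parametrix/Green's function: $u(x,t)=\int_0^t\!\int_{\R^n}\Gamma(x-y,t-s)\,g^k(y,s)\vd y\vd\BM^k_s$ suitably interpreted, and the core analytic estimate is a Schauder-type bound
\begin{equation}\label{eq:apriori}
|u|_{(2+\al,\al/2);\Q_T}^{\bL^\p} \le N\big(|f|_{\al;\Q_T}^{\bL^\p} + |g|_{1+\al;\Q_T}^{\bL^\p}\big),
\end{equation}
with $N=N(n,\al,\pc,K,T)$. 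The key point — and the reason $g$ must live one derivative higher, in $C^{1+\al}_x$, than $f$ — is that the stochastic convolution against $\Gamma$ gains, by the Burkholder–Davis–Gundy inequality applied to the $\bL^\p$-norm, only \emph{one} spatial derivative worth of parabolic regularity (time being counted as half), rather than two; this is exactly the loss of "half the parabolic scaling" familiar from the $L_p$-theory of Krylov. So one estimates $\|D^2 u(x,t)-D^2 u(y,t)\|_{\bL^\p}$ by splitting the convolution into a near part and a far part relative to $|x-y|$, using BDG to pass the $\bL^\p$-norm inside the stochastic integral, and then using the Gaussian bounds on $\partial^\beta\Gamma$ together with the $C^{1+\al}$-seminorm of $g$ (and the $C^\al$-seminorm of $f$ for the Lebesgue-in-time part). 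This is the step I expect to be the main obstacle: getting the sharp, scaling-correct kernel estimates for the stochastic convolution uniformly in $\p\ge2$, and in particular verifying that $C^{1+\al}_x$ for $g$ is both sufficient and, for the "onto"/sharpness part, necessary.

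Second, I would pass from constant to variable coefficients by the usual freezing-the-coefficients (partition of unity) argument: cover $\Q_T$ by small parabolic cylinders, on each of which the coefficients are within $\eps$ of their frozen values in the $C^\al_x$-norm (possible since $a,\si_x,\nu_x$ are uniformly $C^\al_x$ by hypothesis), apply the model estimate \eqref{eq:apriori}, and absorb the error terms $|(a-a_0)D^2u|^{\bL^\p}_\al$, etc., into the left-hand side for $\eps$ small, controlling the lower-order commutator terms by interpolation inequalities in the Hölder scale $C^{k+\al}_x(\Q_T;\bL^\p)$ (which hold just as in the scalar case, since $\bL^\p$ is a Banach space and the spatial variable is genuinely $\R^n$). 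The lower-order coefficients $b,c,\nu$ and the zeroth-order stochastic coefficient contribute terms of order $\le 1$ in $u$ that are likewise handled by interpolation and, if needed, a Gronwall argument in $T$ (or by first proving the estimate on a short time interval and then iterating, since the problem is linear with zero initial data).

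Third, with the a priori estimate \eqref{eq:apriori} in hand, existence and uniqueness follow by the method of continuity: connect the operator in \eqref{chief} to the model operator along the segment $L_\theta = (1-\theta)L_{\mathrm{model}} + \theta L$, $\theta\in[0,1]$; the estimate \eqref{eq:apriori} holds uniformly in $\theta$ (the coefficients of $L_\theta$ satisfy the same bounds), and solvability for $\theta=0$ is the model case, so the standard open-closed argument on $\{\theta : L_\theta\text{ is onto}\}$ gives solvability for $\theta=1$. Uniqueness is immediate from \eqref{eq:apriori} applied to the difference of two solutions with $f=g=0$. Finally, one checks that the solution produced is genuinely a quasi-classical solution in the sense of the Definition: strong twice differentiability of $u(\cdot,t)$ into $\bL^\p$ and the validity of the Itô form in $t$ for each fixed $x$ both follow from the $C^{2+\al,\al/2}_{x,t}$-regularity and a density argument (first verify the Itô form for smooth data where everything is classical, then pass to the limit using \eqref{eq:apriori}). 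The sharpness ("every element of the space arises as a solution") is the converse direction: given $u\in C^{2+\al,\al/2}_{x,t}(\Q_T;\bL^\p)$, simply \emph{define} $f$ and $g$ by plugging $u$ into \eqref{chief} — one must check $f\in C^\al_x$ and $g\in C^{1+\al}_x$, which is where the asymmetry of the scale is forced and which closes the loop.
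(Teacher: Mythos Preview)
Your overall architecture --- a priori Schauder estimate, then method of continuity from the stochastic heat equation, with the a priori estimate obtained by freezing coefficients and interpolation --- matches the paper exactly (\S2.2 and \S5). The substantive difference is in how you propose to obtain the estimate for the model equation with $x$-independent coefficients. You propose potential theory: write the solution as a stochastic convolution against the heat kernel $\Gamma$, apply BDG, and do a near/far splitting on the kernel. The paper instead uses an energy-based perturbation scheme in the spirit of Wang and Trudinger (Sections~3--4): approximate $u$ on a dyadic sequence of shrinking cylinders $Q^\m$ by solutions $u^\m$ of Dirichlet problems with $f,g$ frozen at the centre, control $u^\m - u$ and $u^\m - u^{\m+1}$ via local $L^2$-type energy estimates (Propositions~3.1 and~3.3), and sum the telescoping differences of $u^\m_{xx}$ to get the H\"older modulus of $u_{xx}$ in both $x$ and $t$.

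There is a genuine gap in your route, and it is precisely the obstacle the paper is designed to overcome. The hypotheses allow $a^{ij}$ and $\sigma^{ik}$ to be \emph{random} predictable processes. After freezing in $x$, the model equation still has $a = a(t,\omega)$, $\sigma = \sigma(t,\omega)$ random. The fundamental solution of $\partial_t - a^{ij}(t,\omega)\partial_{ij}$ depends on $\int_s^t a(r,\omega)\,\md r$, which is \emph{not} $\Filt_s$-measurable; hence your Duhamel integrand $\Gamma(x-y,t,s)\,g^k(y,s)$ is anticipating and the formula $u = \int_0^t\!\int \Gamma\, g^k\,\md y\,\md \BM^k_s$ is not a well-defined It\^o integral. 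This is exactly the ``adaptedness issue'' flagged in the introduction, and it is why the earlier parametrix-based results of Rozovsky and Mikulevicius had to assume $a$ deterministic and $\sigma\equiv 0$. Your clause ``then restore the first-order stochastic term'' is likewise the hard part, not a formality: treating $\sigma^{ik}u_{x^i}$ as part of $g$ and absorbing fails because $|\sigma|$ need not be small, and the random-shift trick that removes $\sigma u_x$ replaces $a$ by $a-\tfrac12\sigma\sigma^*$, which is again random, so you are back to an anticipating kernel. The paper's energy method sidesteps both problems at once: the super-parabolicity $2a - \sigma\sigma^* \ge \lambda I$ enters directly in the It\^o-formula energy identity (Lemma~3.2), every object that appears is adapted, and no representation formula for the solution is ever invoked.
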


The Cauchy problem with nonzero initial value can be easily reduced to our case by a simple transform.
Such an established $C^{2+\alpha}$-theory is sharp in the sense that as proposed by Krylov in \cite{krylov1999analytic}. 
We remark that by an anisotropic Kolmogorov continuity theorem (see \cite{dalang2007hitting}), 
if $\p > n/{\al}$, the above obtained quasi-classical solution $u$
has a $C^{2+\delta}_x$ modification with $\delta<\al-n/{\p}$;
and if $\p > (n+2)/{\al}$, then $u$ 
has a $C^{2+\delta,\delta/2}_{x,t}$ modification with $\delta<\al-(n+2)/{\p}$.

Our result can be applied to a wide range of nonlinear filtering problems.
For example, the Zakai equation is the homogeneous case of \eqref{chief} as the terms $f$ and $g$ vanish, and is often associated with a deterministic initial value condition. 
As an application of Theorem \ref{thm1}, we have a more general result that embracing the Zakai equation. 

\begin{corollary}\label{2stthm}
{Under the hypotheses of Theorem \ref{thm1},}
if the initial value $u(\cdot,0)\in C^{2+\al}(\R^n)$, the free terms $f\in L^{\infty}([0,\infty); C^{\al}(\R^n))$ and $g\in L^{\infty}([0,\infty); C^{1+\al}(\R^n))$ are all nonrandom,
then equation \eqref{chief} admits a unique classical solution.
\end{corollary}

Note that in Corollary \ref{2stthm}, the coefficients $a,b,c,\si,\nu$ are allowed to be random 
{and merely required to satisfy natural regularity assumptions.}
To the best of our knowledge, this is a new result concerning the classical solution of the Zakai equation.

\vspace{5pt}
The solvability of SPDEs in $\bL^{\p}$-valued H\"older spaces 
was previously studied by Rozovsky \cite{rozovskiui1975stochastic} and Mikulevicius \cite{mikulevicius2000cauchy}.
However, they both need to assume the leading coefficient $a$ is deterministic and there is no derivatives of $u$ in the stochastic term, namely $\si\equiv0$. 
Such a strong restriction excludes many interesting examples and applications. 
Moreover, neither of them addressed the time-continuity of second-order derivatives 
of $u$, which is now obtained in our Theorem \ref{thm1}.
For more related results under other appropriate assumptions, we refer the reader to, for example \cite{kunita1982stochastic,walsh1986introduction,funaki1991regularity,chow1994stochastic,bally1995approximation} and references therein. 
Most recently, Hairer \cite{Hairer} created an abstract theory of regularity structures for SPDEs including multi-level Schauder estimates. 
Our approach in this paper is totally different to that of \cite{Hairer}.

\vspace{5pt}
The solvability in Theorem \ref{thm1} can be derived by the standard method of continuity (see \S\ref{sec2.2}), once we have the following Schauder estimate. 

\begin{theorem}\label{schauder1}
Under the hypotheses of Theorem \ref{thm1}, 
letting $u$ be a quasi-classical solution of \eqref{chief} and $u(\cdot,0) = 0$,
there is a positive constant $C$ depending only on $n,\pc,\p,\al$ and $K$ such that
\begin{equation}\label{schauder.est0}
|u|_{(2+\al,\al/2);\Q_T}^{{\bL^\p}} 
\le C \me^{CT} (|f|_{\al;\Q_T}^{{\bL^\p}} + |g|_{1+\al;\Q_T}^{{\bL^\p}}).
\end{equation}
\end{theorem}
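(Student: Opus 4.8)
The plan is to follow the classical Schauder strategy adapted to the stochastic, $\bL^\p$-valued setting: (i) prove an a priori estimate for the constant-coefficient model problem on the whole space, (ii) pass to variable coefficients by a partition of unity / freezing argument together with interpolation, and (iii) absorb lower-order terms and iterate on small time intervals, producing the factor $\me^{CT}$. First I would treat the model equation $\md u = \aij_0 u_{\xdij}\vd t + \sik_0 u_{\xdi}\vd\BM^k_t + f\vd t + g^k\vd\BM^k_t$ with $\aij_0,\sik_0$ frozen (constant in $x$, possibly still $\om$-dependent but measurable w.r.t.\ $\Filt_0$). Here the key estimate is
\begin{equation}\label{model.est}
|u|_{(2+\al,\al/2);\Q_T}^{\bL^\p} \le C\bigl(|f|_{\al;\Q_T}^{\bL^\p} + |g|_{1+\al;\Q_T}^{\bL^\p}\bigr),
\end{equation}
with $C = C(n,\pc,\p,\al)$ independent of $T$. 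I expect this to be the heart of the argument and the main obstacle. One route is via the stochastic heat kernel / fundamental solution associated with the (random but $x$-independent) operator, writing $u$ through a stochastic Duhamel formula and estimating the relevant singular integral operators; the parabolicity window $\pc I_n + \si\si^* \le 2a$ in \eqref{parab} is exactly what guarantees the requisite Gaussian-type bounds on the kernel after accounting for the It\^o correction $-\tfrac12\sik_0\sjk_0 u_{\xdij}$ hidden in the drift. A cleaner alternative, which I would pursue, is to deduce \eqref{model.est} from Krylov's $L^\p$-theory for \eqref{chief} combined with a bootstrap: $L^\p$-estimates in Sobolev spaces $H^{\tht}_q$ plus Sobolev embedding give H\"older control, and a scaling/localization argument upgrades this to the sharp $C^{2+\al,\al/2}$ norm; the crucial point is to obtain the \emph{time}-H\"older seminorm of $D^2 u$, which one gets from the equation itself by writing $D^2 u(X) - D^2 u(Y)$ in terms of time-increments of the stochastic integral and invoking the Burkholder--Davis--Gundy inequality in $\bL^\p$.

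Next, for variable coefficients I would localize: fix a ball $B_r(x_0)$, freeze the coefficients at $(x_0,t)$, and write the equation for $\zeta u$ (with $\zeta$ a cutoff) as a model equation with right-hand sides $\tilde f = \zeta f + (\aij - \aij_0)(\zeta u)_{\xdij} + \text{(lower order and commutator terms)}$ and similarly for $\tilde g^k = \zeta g^k + (\sik - \sik_0)(\zeta u)_{\xdi} + \cdots$. The $C^\al$-bound $|\aij - \aij_0|_{\al;B_r} \le K r^{\al}$ lets one absorb the leading perturbation into the left-hand side once $r$ is small, at the cost of constants depending on $K$; the derivative terms $\si^i_{\!x},\nu_x$ appearing in the hypotheses are precisely what is needed so that $\tilde g^k \in C^{1+\al}_x$. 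Here standard interpolation inequalities for the H\"older norms (valid verbatim in the $\bL^\p$-valued setting since they are pointwise-in-$\om$ manipulations followed by taking $\bL^\p$-norms) let me control $|u|_{1+\al}$ and $|u|_{2}$ by $\eps |u|_{2+\al} + C_\eps |u|_0$.

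Finally I would patch the local estimates together with a covering of $\R^n$ by balls of the fixed small radius $r=r(n,\pc,\al,K)$, using that the overlap multiplicity is dimensional, to obtain
\begin{equation*}
|u|_{(2+\al,\al/2);\Q_T}^{\bL^\p} \le C\bigl(|f|_{\al;\Q_T}^{\bL^\p} + |g|_{1+\al;\Q_T}^{\bL^\p} + |u|_{0;\Q_T}^{\bL^\p}\bigr).
\end{equation*}
It remains to kill the $|u|_0$ term. Since the estimate so far has $C$ independent of the length of the time interval, I would run it on subintervals $(t_0,t_0+\de)$ of a fixed small length $\de = \de(n,\pc,\p,\al,K)$ and use the equation with zero initial data on each piece to bound $\sup_{(x,t)\in\Q_{\de}}\|u(x,t)\|_{\bL^\p}$ by $\de^{\al/2}$ times the full H\"older norm plus the data, absorbing it for $\de$ small; chaining $\lceil T/\de\rceil$ such intervals and tracking the accumulation of constants yields the exponential factor $C\me^{CT}$. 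The one genuinely delicate point throughout is keeping all constants in the model estimate \eqref{model.est} independent of $T$ and of the realization $\om$ (so that freezing coefficients at a fixed $\om$ is legitimate and the $\bL^\p$-norm can be taken at the end); once that is secured, the rest is the standard Schauder machinery transcribed to predictable $\bL^\p$-valued fields.
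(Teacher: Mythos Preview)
Your high-level outline for the reduction step --- freeze coefficients in space, apply a cutoff, use interpolation to absorb lower-order and commutator terms, then iterate on short time intervals to produce the $\me^{CT}$ factor --- matches the paper's Section~5 closely and is correct. The divergence, and the genuine gap, is in the model estimate \eqref{model.est}. Both routes you propose are problematic. The heat-kernel/Duhamel route does not apply directly because the model coefficients $\aij_0,\sik_0$ in the paper are \emph{predictable processes} (functions of $(t,\omega)$, constant only in $x$), not deterministic or $\Filt_0$-measurable constants as you write; there is no classical fundamental solution to integrate against. The Krylov-$L^\p$-plus-embedding route is precisely what the paper's introduction flags as non-sharp: Sobolev embedding from $H^s_\p$ into $C^{2+\al}$ costs $n/\p$ derivatives, so recovering $C^{2+\al}_x$ requires strictly more than $C^\al_x$ regularity on $f$; your phrase ``a scaling/localization argument upgrades this to the sharp norm'' is exactly the step that needs a real argument and is the main content of the paper. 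Relatedly, your remark that one should keep constants independent of $\omega$ ``so that freezing coefficients at a fixed $\omega$ is legitimate and the $\bL^\p$-norm can be taken at the end'' is a conceptual misstep: It\^o integrals are not defined pathwise, so one cannot fix $\omega$, solve a deterministic parabolic equation, and then take expectations.

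What the paper actually does for the model estimate (its Sections~3--4) is a Campanato/Wang-style perturbation scheme driven by $L^2$-based energy estimates rather than by a maximum principle or kernel bounds. One first proves local $L^\p_\omega L^2_t H^m_x$ interior estimates for the model equation via It\^o's formula and the BDG inequality (Proposition~\ref{local}, Lemma~\ref{p2m}). Then, on each dyadic parabolic cylinder $Q^\m = Q_{2^{-\m}}$, one solves an auxiliary Dirichlet problem $u^\m$ with the free terms $f,g$ frozen (in $x$ only) at the centre, shows via those energy bounds that $u^\m_{xx}(0)\to u_{xx}(0)$ in $\bL^\p$, and controls the oscillation of $u_{xx}$ by telescoping the differences $u^\m - u^{\m+1}$ (Theorem~\ref{Dini}). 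This is the substitute for the missing maximum principle, and it is where the hypothesis $g\in C^{1+\al}_x$ (rather than merely $C^\al_x$) is actually used, via the first-order Taylor freezing of $g$ at the centre.
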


\vspace{5pt}
In non-stochastic cases, the Schauder estimate is one of the most important estimates for elliptic and parabolic equations, which was traditionally built upon the potential theory, and then was obtained via different approaches by, for instance, Campanato \cite{campanato1964proprieta}, Trudinger \cite{trudinger1986new}, Schlag \cite{Schlag}, Simon \cite{simon1997schauder}, and others.
Also, perturbation arguments were used by Safonov \cite{safonov1984classical}, Caffarelli \cite{caffarelli1989interior}, and Wang \cite{Wang2006}, which can be applied to fully nonlinear equations. 
However, each individual method of the above has some essential defect when applied to the SPDEs, partially because of the adaptedness issues, and also the absent of a proper maximum principle for the SPDEs.
\footnote{Two different types of maximum principle for SPDEs were obtained in \cite{denis2005p} and \cite{krylov2007maximum}, respectively, but neither is suitable for our circumstance.}

\vspace{5pt}
In our proof of Theorem \ref{schauder1}, we adopt the perturbation scheme from Wang's work \cite{Wang2006}, while instead of using the maximum principle, we establish certain integral-type estimates as inspired by the work of Trudinger \cite{trudinger1986new}. See also \cite{DuLiu}, where many of our results were first announced.

\vspace{5pt}
The paper is organised as follows.
In Section \ref{sec2}, we introduce some notation and extend Definition \ref{nota} to general cases in \S\ref{sec2.1}, which will be used in subsequent sections. In \S\ref{sec2.2}, by assuming having Theorem \ref{schauder1}, we prove Theorem \ref{thm1} via the method of continuity.
In Sections \ref{sec3} and \ref{sec4}, we consider a model equation
\begin{equation}
\md u = (\aij u_\xdij + f)\vd t + (\sik u_\xdi + g^k) \vd \BM^k_t,
\end{equation}
where the random coefficients $a$ and $\si$ are independent of $x$.
We first prove some auxiliary estimates in \S\ref{sec3}, and then establish the interior H\"older estimate in \S\ref{sec4}, which is the crucial ingredient of obtaining the Schauder estimate \eqref{schauder.est0}.
In Section \ref{sec5}, we prove Theorem \ref{schauder1} by establishing the global Schauder estimate for the Cauchy problem of \eqref{chief}.
Some properties and approximation of $\bL^\p$-valued continuous functions are proved in Appendix.

\section{Preliminaries}\label{sec2}

\subsection{Notation}\label{sec2.1}
For a function $u$ of $x=(x^1,\dots,x^n)\in\R^n$, we denote
\begin{gather*}
 u_\di = \Di u = u_{x^i},
 \quad u_\dij = \Dij u = u_{x^i x^j},
 \quad
 D u = u_{x} = (u_{1},\dots,u_{n}).
\end{gather*}
Hereafter, $\beta=(\beta_1,\dots,\beta_n)$ with $\beta_i\in\Nat=\{0,1,2,\dots\}$ is a multi-index; we denote
\[
D^\beta = D^{\beta_1} \cdots D^{\beta_n},
\quad |\beta|=\beta_1 +\cdots +\beta_n.
\]
For $m\in\Nat$ we denote $D^m u$ the set of all $m$-order derivatives of $u$.
These $D^m u(x)$ for each $x$ are regarded as elements
of a Euclidean space of proper dimension.

Let $\Dom$ be a domain in $\R^n$, $I\subset \R$ be an interval, and $Q:=\Dom\x I$.
Let $E$ be a Banach space.
For a function $h : \Dom\to E$, we define
\begin{align*}
& [h]^E_{0;\Dom} = |h|^E_{0;\Dom} := \sup_{x\in\Dom}\|h(x)\|_E;\quad \mbox{and} \\
& [h]^E_{\al;\Dom} := \sup_{x,y\in\Dom,\ x \neq y}\frac{\|h(x) - h(y)\|_E}{|x-y|^{\al}} \ \ \mbox{ for } \al \in (0,1).
\end{align*}
Then for $m\in\Nat$ and $\al\in (0,1)$, denote 
\begin{align*}
|h|^E_{m;\Dom} := & \max_{|\beta|\le m}|D^\beta h|^E_{0;\Dom}, \\
|h|^E_{m+\al;\Dom} := & |h|^E_{m;\Dom} + \max_{|\beta|= m}[D^\beta h]^E_{\al;\Dom}.
\end{align*}
Here and below, all the derivatives of an $E$-valued function are
defined with respect to the spatial variable in the strong sense, 
see \cite{hille1957functional}.

For a function $u: Q=\Dom \x I\to E$, we define
\vspace{-5pt}
\begin{align*}
& [u]^E_{\al;Q} := \sup\nolimits_{t\in I} [u(\cdot,t)]^E_{\al;\Dom}, \\
& |u|^E_{m+\al;Q} := \sup\nolimits_{t\in I} |u(\cdot,t)|^E_{m+\al;\Dom}. 
\end{align*}
Letting $\pdist{X} = \pdist{(x,t)} = |x| + \sqrt{|t|}$ be the parabolic modulus of $X=(x,t)\in\R^n\x\R$,
we further define 
\begin{align*}
[u]^E_{(m+\al,\al/2);Q} & :=  \max_{|\beta|=m}\sup_{X,Y\in Q,\ X\neq Y} 
\frac{\|D^\beta_x u(X) - D^\beta_x u(Y)\|_E}{\pdist{X-Y}^{\al}}, \\
|u|^E_{(m+\al,\al/2);Q} & := |u|^E_{m;Q} + [u]^E_{(m+\al,\al/2);Q}.
\end{align*}

In the following context, the space $E$ is either a Euclidean space or $\bL^\p$, where $\p\in [2,\infty)$ is a fixed constant.
We omit the superscript when $E$ is a Euclidean space.
In the case of $E=\bL^\p$, we introduce some new notation: 
\begin{equation*}
\norm{\cdot}_{\dots} := |\cdot|^{\bL^\p}_{\dots},
\quad
\hnorm{\,\cdot\,}_{\dots} := [\,\cdot\,]^{\bL^\p}_{\dots}.
\end{equation*}
For instance, 
$ \norm{u}_{m+\al;Q} = |u|^{\bL^\p}_{m+\al;Q}$, and
$\hnorm{u}_{(\al,\al/2);Q} = [u]^{\bL^\p}_{(\al,\al/2);Q}$.

Using the above notation, the spaces $C^{m+\al}_x(Q;\bL^\p)$ and $C^{m+\al,\al/2}_{x,t}(Q;\bL^\p)$ defined in Definition \ref{nota} are the sets of all predictable random fields $u:Q\x\PS\to\R$ 
such that $\norm{u}_{m+\al;Q}$ and $\norm{u}_{(m+\al,\al/2);Q}$ are finite, respectively.

\subsection{The solvability}\label{sec2.2}

Let $\cL$ and $\cM^k$ be differential operators
\begin{equation*}
\cL = \aij \Dij + \bi \Di + c,
\quad \mbox{and }\ \cM^k = \sik \Di + \nu^k.
\end{equation*}
The Cauchy problem under consideration can be written as
\begin{equation}\label{cauchy.pr}
\begin{aligned}
& \md u = (\cL u + f)\vd t + (\cM^k u + g^k) \vd \BM^k_t 
\quad \text{in } \Q:= \R^n\x [0,\infty),\\
& u(\cdot,0) = 0 \quad \text{in } \R^n.
\end{aligned}
\end{equation}

Throughout the paper, we assume that
\medskip

({\bf H})~
For all $i,j=1,\dots,n$, 
the random fields $\aij$, $\bi$, $c$ and $f$ are real-valued, 
and $\si^i$, $\nu$ and $g$ are $\ell^2$-valued;
all of them 
are {predictable}.
$\aij$ and $\si^i$ satisfy the {stochastic parabolic condition} \eqref{parab}.
For some $\al\in(0,1)$ there exists a constant $K$ such that
$\max\{|\aij|_{\al;\Q},|\bi|_{\al;\Q} ,|c|_{\al;\Q},
|\si^{i}|_{1+\al;\Q} , |\nu|_{1+\al;\Q} \}
\le K$ for all $\om\in\PS$.

\medskip

Recall that
$\Q_T = \R^n \x (0,T)$, and $T>0$.
Using the notation in \S\ref{sec2.1}, the Schauder estimate in Theorem \ref{schauder1} can be written as: 
There is a positive constant $C$ depending only on $n,\pc,\p,\al$ and $K$, such that
\begin{equation}\label{schauder.est}
\norm{u}_{(2+\al,\al/2);\Q_T} 
\le C \me^{CT} (\norm{f}_{\al;\Q_T} + \norm{g}_{1+\al;\Q_T})
\end{equation}
for any $T>0$, where $u$ is a quasi-classical solution of the Cauchy problem \eqref{cauchy.pr}.

\begin{proof}[Proof of Theorem~\ref{thm1}]
With the above a priori estimates in hand,
we can obtain the solvability of the Cauchy problem \eqref{cauchy.pr}
by the method of continuity.
Consider
\begin{equation}\label{con.eq}
\md u = (\cL_s u + f)\vd t + (\cM^k_s u + g^k) \vd \BM^k_t,
\quad
u(\cdot,0) = 0,
\end{equation}
where $s\in [0,1]$ and
\[
\cL_s := s\cL + (1-s)\lap,
\quad
\cM^k_s := s \cM^k.
\]
Evidently, the solutions of \eqref{con.eq} satisfy 
the a priori estimate \eqref{schauder.est} with the constant $C$ independent of $s$. 
In view of \cite[Theorem~5.2]{gilbarg2001elliptic},
it suffices to show the solvability of the stochastic heat equation (the case $s=0$):
\begin{equation}\label{s=0}
\md u = (\lap u + f)\vd t +  g^k \vd \BM^k_t,
\quad
u(\cdot,0) = 0.
\end{equation}

Set $f^\eps = \vf^\eps * f$ and $g^\eps = \vf^\eps * g$,
where $\vf^\eps(x)=\eps^n\vf(x/\eps)$ and $\vf$ is a nonnegative and symmetric mollifier defined on $\R^n$ (see Appendix).
Then (from Lemma~\ref{lemA1.5} in Appendix) we have that $f^\eps\in \Ct^{\al}_x(\Q;\bL^\p)$ and $g^\eps\in\Ct^{1+\al}_x(\Q;\bL^\p)$ satisfying \begin{equation}\label{approxam}
\norm{f^\eps - f}_{\al/2;\Q_T} + \norm{g^\eps - g}_{1+\al/2;\Q_T} 
\to 0,\quad\text{as } \eps\to 0.
\end{equation}
Moreover, $f^\eps(x,t,\om)$ and $g^\eps(x,t,\om)$ are smooth in $x$ for any $(t,\om)$, and 
$f^\eps,g^\eps\in C^m(\Q_T;\bL^\p)$ for all $m\in\Nat$,
so by Fubini's theorem,
\begin{align*}
& \E\int_{\Q_T} (1+|x|^2)^{-p} (|D^m f^\eps (x,t)|^\p + |D^m g^\eps (x,t)|^\p)\vd x \md t \\
& \le \int_{\Q_T} (1+|x|^2)^{-p} \bigl(\E|D^m f^\eps (x,t)|^\p + \E|D^m g^\eps (x,t)|^\p \bigr)\vd x \md t \\
& \le C(n,p)T (\norm{f^\eps}_{m;\Q_T}^\p + \norm{g^\eps}_{m;\Q_T}^\p)<\infty \quad\forall\,m\in\Nat
\end{align*}
with $2p > n$.
Therefore, it follows from Krylov--Rozovsky \cite[Theorem~2.2]{krylov1982stochastic}
that \eqref{s=0} with free terms $f^\eps$ and $g^\eps$
admits a unique weak solution $u^\eps$ satisfying 
\[
\E \sup_{t\in[0,T]}\int_{\R^n} (1+|x|^2)^{-p} |D^m u^\eps(x,t)|^\p 
\vd x < \infty\quad\forall\,m\in\Nat,
\]
and by Sobolev's embedding, $u^\eps$ is smooth in $x$, and
$\E|D^m u^\eps(x,t)|^\p <\infty$ for each $(x,t)\in\Q_T$ and $m\in\Nat$ (see Lemma~\ref{lemA1} in Appendix).
From estimate~\eqref{schauder.est} (with $\alpha$ instead of $\alpha/2$) and keeping \eqref{approxam} in mind, we have
\[
\norm{u^{\eps} - u^{\eps'}}_{2;\Q_T} \le C (\norm{f^{\eps} - f^{\eps'}}_{\al/2;\Q_T} + \norm{g^{\eps} - g^{\eps'}}_{1+\al/2;\Q_T}) \to 0
\]
as $\eps,\eps' \to 0$.
Hence, $u^\eps$ converges to a function 
$u \in \Ct^{2,0}_{x,t}(\Q_T;\bL^\p)$ that apparently solves~\eqref{s=0}.
The regularity and the uniqueness follow directly from the estimate~\eqref{schauder.est}.
\end{proof}

\section{Auxiliary estimates for the model equation}\label{sec3}

In order to prove Theorem \ref{schauder1}, we start by considering the model equation
\begin{equation}\label{model}
\md u = (\aij u_\dij + f )\vd t + (\sik u_\di + g^k) \vd \BM^k_t,
\end{equation}
where $\aij$ and $\sik$ are predictable processes,
independent of $x$,
and satisfy the stochastic parabolic condition \eqref{parab}.
We first prove some auxiliary estimates for the model equation in this section, and then proceed to the interior H\"older estimate in the next section.

In the following two sections we mainly concern the local estimates for the equation \eqref{model}, so we can only focus on the estimates around the origin on account of translation.
Indeed, we can reduce the estimates around a point $(x_0,t_0)$ to the estimates around the origin by use of the change of variables $(x,t)\mapsto (x-x_0,t-t_0)$.
For this reason, {\em we may consider \eqref{model} in the entire space $\R^n\times\R$.}
On the other hand, if $u$ satisfies \eqref{model} in $\R^n\times[0,\infty)$ with $u(x,0)=0$, the zero extensions of $u$, $f$ and $g$ 
(i.e., these function are defined to all equal zero for $t<0$) satisfy the equation in the entire space, 
where the extension of coefficients and Wiener processes are quite easy; for example,
we can define $\aij(t) = \delta^{ij}$ and $\sik(t)=0$ for $t<0$, and $\BM_t := \tilde{\BM}_{-t}$ for $t<0$ with $\tilde{\BM}$ being an independent copy of $\BM$.

Let $\Dom \subset \R^n$, and $H^m(\Dom)=W^{m,2}(\Dom)$ be the usual Sobolev spaces.
Let $I\subset\R$ and $Q=\Dom\x I$.
Define
\begin{align*}
L^p_\om L^q_t H^{m}_{x}(Q) & := L^{p}(\PS; L^q (I; H^{m}(\Dom))),
\quad \mbox{for }\ p,q \in [1,\infty].
\end{align*}
For $r>0$, we denote
\begin{equation}\label{BQ}
B_r(x) = \{y\in\R^n:|y-x|<r\},
\quad
Q_r(x,t) = B_r(x) \x (t-r^2,t],
\end{equation}
and simply write $B_r = B_r(0)$, $Q_r = Q_r(0,0)$.

\begin{proposition}\label{local}
Let $m$ be a positive integer, $r \in (0,\infty)$ and $\tht \in (0,1)$.
Let $u\in L^p_\om L^2_t H^{m+1}_{x}(Q_{r})$ solve~\eqref{model} in $Q_r$ with 
$f\in L^p_\om L^2_t H^{m-1}_{x}(Q_{r})$ 
and $g\in L^p_\om L^2_t H^{m}_{x}(Q_{r})$.
Then there exists a constant $C=C(n,p,\pc,m,\tht)$ such that
\begin{gather*}
\|D^m u\|_{L^p_\om L^\infty_t L^2_{x}(Q_{\tht r})}
+ \|D^m u_{x}\|_{L^p_\om L^2_t L^2_{x}(Q_{\tht r})}
\le C r^{-m-1}\|u\|_{L^p_\om L^2_{t}L^2_{x}(Q_{r})} \\
+ C \sum_{k=0}^{m-1} r^{-m+k+1} \|D^{k} f\|_{L^p_\om L^2_t L^2_{x}(Q_{r})} 
+ C \sum_{k=0}^{m} r^{-m+k} \|D^k g\|_{L^p_\om L^2_t L^2_{x}(Q_{r})}.
\end{gather*}
Consequently, for $2(m-|\be|)>n$,
\begin{gather*}
\| \sup\nolimits_{Q_{\tht r}} \! |D^\be u| \|_{L^{p}_{\om}}
\le  
C r^{-|\be| - n/2 - 1} \|u\|_{L^p_\om L^2_{t}L^2_{x}(Q_{r})} \\
+ C \sum_{k=0}^{m-1} r^{-|\beta|-n/2+k+1} \|D^{k} f\|_{L^p_\om L^2_t L^2_{x}(Q_{r})} 
+ C \sum_{k=0}^{m} r^{-|\beta|-n/2+k} \|D^k g\|_{L^p_\om L^2_t L^2_{x}(Q_{r})}.
\end{gather*}
\end{proposition}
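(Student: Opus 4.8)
The plan is to prove the first (energy) estimate by a Caccioppoli-type argument based on It\^o's formula, and then deduce the pointwise bound by Sobolev embedding. First, since $\aij$ and $\si^{ik}$ do not depend on $x$, both \eqref{model} and the family $Q_r$ transform covariantly under the parabolic dilation $u(x,t)\mapsto u(rx,r^2t)$ together with a time-rescaling of the Wiener processes (Brownian scaling), mapping the problem on $Q_r$ to one of the same type on $Q_1$, with the same constant $\pc$ and with $f,g$ replaced by $r^2f(r\,\cdot\,,r^2\,\cdot\,)$ and $rg(r\,\cdot\,,r^2\,\cdot\,)$; so it suffices to treat $r=1$, the powers of $r$ in the conclusion being then forced by dimensional bookkeeping. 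Second, mollifying $u$ in $x$ (legitimate precisely because $\aij,\si^{ik}$ are $x$-free, so $\vf^\eps\!*u$ solves \eqref{model} with data $\vf^\eps\!*f,\vf^\eps\!*g$) reduces us to $u$ smooth in $x$, which legitimises the It\^o computations; the general case then follows by letting $\eps\to0$ and using weak lower semicontinuity of the norms.

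\textbf{The one-order energy estimate.} Fix $0<\rho_1<\rho_2\le1$ and a cut-off $\zeta$ with $\zeta\equiv1$ on $Q_{\rho_1}$, $\mathrm{supp}\,\zeta\subset Q_{\rho_2}$, $\zeta$ vanishing near the parabolic boundary of $Q_{\rho_2}$, and $|D\zeta|+|\zeta_t|^{1/2}\le C(\rho_2-\rho_1)^{-1}$. For $0\le j\le m$ we apply It\^o's formula to $t\mapsto\int_{\R^n}\zeta^2|D^ju(x,t)|^2\vd x$, using that $D^ju$ solves \eqref{model} with data $(D^jf,D^jg)$ (again the $x$-independence of $\aij,\si^{ik}$ is essential). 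Integrating by parts in $x$ against the second-order term yields $-2\int\zeta^2a^{pq}\,\partial_pD^ju\,\partial_qD^ju\vd x$, which must be combined with the It\^o correction $+\int\zeta^2|\si^{pk}\partial_pD^ju|^2\vd x$ from the quadratic variation; by the stochastic parabolicity \eqref{parab}, i.e.\ $\pc I_n+\si\si^*\le2a$, their sum is $\le-\pc\int\zeta^2|D^{j+1}u|^2\vd x$, and this cancellation is the crux. The remaining drift terms are either cut-off errors $\le C(\rho_2-\rho_1)^{-2}\int|D^ju|^2$, or the forcing $2\int\zeta^2D^ju\cdot D^jf$, which we integrate by parts once more so that only $D^{j-1}f$ (not $D^jf$) enters. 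The cross term $2\int\zeta^2\si^{pk}\partial_pD^ju\cdot D^jg^k$ and the martingale part are controlled by Young's inequality (absorbing the resulting $\eps\int\zeta^2|D^{j+1}u|^2$) and, after raising to the power $p/2$ and taking expectations, by the Burkholder--Davis--Gundy inequality; a (stochastic) Gr\"onwall argument in $t$ moves the $L^\infty_t$-norm to the left. The outcome is
\begin{equation*}
\|D^ju\|_{L^p_\om L^\infty_tL^2_x(Q_{\rho_1})}+\|D^{j+1}u\|_{L^p_\om L^2_tL^2_x(Q_{\rho_1})}\le\frac{C\,\|D^ju\|_{L^p_\om L^2_tL^2_x(Q_{\rho_2})}}{\rho_2-\rho_1}+C\|D^{j-1}f\|_{L^p_\om L^2_tL^2_x(Q_{\rho_2})}+C\|D^jg\|_{L^p_\om L^2_tL^2_x(Q_{\rho_2})}.
\end{equation*}

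\textbf{Iteration and the pointwise bound.} For $j\ge1$ this already has the right form except for $\|D^ju\|_{L^p_\om L^2_tL^2_x}$ on the right. When $m=1$ we run it with $j=1$ and use the interior interpolation inequality $\|Du\|_{L^2_tL^2_x(Q_\rho)}\le\eps\|D^2u\|_{L^2_tL^2_x(Q_{\rho'})}+C_\eps\|u\|_{L^2_tL^2_x(Q_{\rho'})}$ ($\rho<\rho'$) to trade $\|Du\|_{L^2_tL^2_x}$ for a small multiple of $\|D^2u\|_{L^2_tL^2_x}$; feeding this back produces an inequality $\Phi(\rho_1)\le\tfrac12\Phi(\rho_3)+A(\rho_3-\rho_1)^{-2}+B$ for $\Phi(\rho):=\|D^2u\|_{L^p_\om L^2_tL^2_x(Q_\rho)}$, and a standard iteration lemma removes the $\Phi$-term, giving Proposition~\ref{local} at $m=1$. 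For $m\ge2$ we induct on $m$: the displayed estimate with $j=m$ bounds $\|D^mu\|_{L^p_\om L^\infty_tL^2_x(Q_{\tht})}+\|D^{m+1}u\|_{L^p_\om L^2_tL^2_x(Q_{\tht})}$ by $C\|D^mu\|_{L^p_\om L^2_tL^2_x(Q_{\si})}+C(\|D^{m-1}f\|+\|D^mg\|)_{L^p_\om L^2_tL^2_x(Q_{\si})}$ for $\tht<\si<1$, and $\|D^mu\|_{L^p_\om L^2_tL^2_x(Q_\si)}$ is exactly one of the terms controlled by the induction hypothesis at order $m-1$ on $Q_\si$; undoing the normalisation restores the powers $r^{-m-1}$ and $r^{k-m}$. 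Finally, for $2(m-|\be|)>n$ and each $(t,\om)$ the Sobolev embedding $H^{m-|\be|}(B_{\tht r})\hookrightarrow C^0(B_{\tht r})$ gives $\sup_{B_{\tht r}}|D^\be u(\cdot,t)|\le C\sum_{j=0}^{m-|\be|}r^{j-n/2}\|D^{|\be|+j}u(\cdot,t)\|_{L^2(B_{\tht r})}$; taking $\sup_t$, then the $L^p_\om$-norm, and inserting the first part of the Proposition at order $|\be|+j\le m$ for each $j$, the two powers of $r$ combine to $r^{-|\be|-n/2-1}$ in front of $\|u\|$ and to $r^{-|\be|-n/2+k}$ in front of the $(D^{k-1}f,D^kg)$-terms, which is the claim.

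\textbf{Main obstacle.} The crucial and most delicate point is the energy estimate: it is precisely the \emph{stochastic} parabolicity $\pc I_n+\si\si^*\le2a$ (not just the upper bound $2a\le\pc^{-1}I_n$) that makes the It\^o correction absorbable by the dissipation, and without it the estimate simply fails. A secondary technical burden is keeping $\sup_t$ inside the $L^p_\om$-norm --- combining Burkholder--Davis--Gundy with a Gr\"onwall argument --- and arranging the cut-off errors to lie exactly one order below the leading term, so that the iteration closes with the correct powers of $r$ and the undifferentiated $g$ never appears.
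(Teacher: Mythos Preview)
Your argument is correct. The main difference from the paper's proof is organisational: the paper first establishes a \emph{global} mixed-norm estimate (Lemma~\ref{p2m}) for the Cauchy problem on $\R^n\times[0,T]$ with zero initial data, and then localises by substituting $v=\zeta u$ into that global estimate; two nested cut-offs (from $Q_1$ to $Q_{\sqrt\tht}$, then from $Q_{\sqrt\tht}$ to $Q_\tht$) telescope the gain in regularity without any iteration lemma or interpolation. You instead apply It\^o's formula directly to $t\mapsto\int\zeta^2|D^ju|^2$ and obtain a local Caccioppoli inequality in one stroke, paying for this directness with the interpolation--iteration step needed to absorb the $\|D^ju\|_{L^2_tL^2_x}$ term on the right. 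Both routes are standard for deterministic parabolic equations and carry over here for the same reason---the $x$-independence of $a,\si$ and the strict stochastic parabolicity~\eqref{parab}; the paper's route is slightly more modular (Lemma~\ref{p2m} is reused in Proposition~\ref{Lpdom}), while yours is more self-contained for the local estimate itself. One minor remark: your ``stochastic Gr\"onwall argument'' is not actually needed---since $\zeta$ vanishes near the bottom of $Q_{\rho_2}$ and the cut-off errors land on $\int_{Q_{\rho_2}}|D^ju|^2$ rather than on $\int\zeta^2|D^ju|^2$, it suffices to take $\sup_t$, raise to the power $p/2$, take expectation, and apply BDG plus Young, exactly as in the paper's proof of Lemma~\ref{p2m}.
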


It suffices to prove the first inequality as
the second one follows directly from Sobolev's embedding theorem.
In fact, from Sobolev's embedding theorem, one has $H^{m}_{x}(Q_{\tht r}) \subset C^j_x(Q_{\tht r})$
if $2(m-j)>n$, see \cite[Theorem~4.12]{adams2003sobolev}. 
Hence, $L^p_\om L^\infty_t H^m_{x}(Q_{\tht r}) \subset L^p_\om L^\infty_t C^j_{x}(Q_{\tht r})$.
More specifically, if $2(m-|\be|)>n$, then
\[
\| \sup\nolimits_{Q_{\tht r}} \! |D^\be u| \|_{L^{p}_{\om}}
\le C \|u\|_{L^p_\om L^\infty_t H^m_{x}(Q_{\tht r})},
\]
for a constant $C$ independent of $r$.
In order to establish the above local estimates, we first show the following
mixed-norm estimates for the model equation \eqref{model}.

\begin{lemma}\label{p2m}
Let $\Q_T=\R^n\x [0,T]$,
$p\ge 2$ and $m \in \Nat$.
Suppose $f\in L^p_\om L^2_t H^{m-1}_{x}$ and $g\in L^p_\om L^2_t H^{m}_{x}$.
Then equation~\eqref{model} with zero initial condition admits a unique weak solution 
$u \in L^p_\om L^\infty_t H^{m}_{x}(\Q_T) \cap L^p_\om L^2_t H^{m+1}_{x}(\Q_T)$,
and for any multi-index $\be$ such that $|\be| \le m$,
\begin{equation}\label{p22est}
\|D^\be u\|_{L^p_\om L^\infty_t L^2_{x}}
+ \|D^\be u_{x}\|_{L^p_\om L^2_t L^2_{x}} 
\le C (\|D^\be f\|_{L^p_\om L^2_t H^{-1}_{x}} 
+ \|D^\be g\|_{L^p_\om L^2_t L^2_{x}}).
\end{equation}
where $C = C(n,p,T,\pc)$.
\end{lemma}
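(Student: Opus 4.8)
\textbf{Proof plan for Lemma \ref{p2m}.}

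The plan is to reduce the $L^p_\om L^2_t H^m_x$ estimate to the basic $L^2$-theory for linear SPDEs and then bootstrap in two directions: first in the spatial regularity index $m$, and second in the stochastic integrability exponent $p$. The case $p=2$, $m=0$ is classical: since $a$ is uniformly parabolic (by \eqref{parab}) and independent of $x$, applying It\^o's formula to $\|u(\cdot,t)\|^2_{L^2_x}$, integrating by parts in $x$, and using $2a \ge \pc I_n + \si\si^*$ to absorb the Fourier-side contribution of the stochastic term $\sik u_{x^i}$, one obtains the standard energy identity with the ``stochastic parabolicity gap'' $\pc I_n$ controlling $\|u_x\|^2_{L^2_t L^2_x}$; taking expectations, using Burkholder--Davis--Gundy on the martingale part and Gronwall in $t$, gives \eqref{p22est} for $\be=0$, $p=2$. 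Existence and uniqueness of the weak solution in this class is then the Krylov--Rozovsky $L^2$-theory cited in \S2.2.

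Next I would raise $m$. Because $a,\si$ do not depend on $x$, the spatial derivatives $v = D^\be u$ formally solve the \emph{same} model equation \eqref{model} with $f$ replaced by $D^\be f$ and $g^k$ replaced by $D^\be g^k$ and zero initial data. Hence the estimate for $v$ is literally the $m=0$ case applied to $(D^\be f, D^\be g)$, which is exactly the right-hand side of \eqref{p22est} (note $D^\be f \in L^2_t H^{-1}_x$ whenever $f\in L^2_t H^{m-1}_x$ and $|\be|\le m$). To make this rigorous one works with finite-difference quotients in $x$ rather than derivatives, derives the estimate uniformly in the difference parameter, and passes to the limit; the uniform bound also upgrades $u$ from the a priori class to $L^p_\om L^\infty_t H^m_x \cap L^p_\om L^2_t H^{m+1}_x$. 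Summing over $|\be|\le m$ gives the full $H^m$ statement.

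The remaining, and genuinely substantive, step is to pass from $p=2$ to general $p\ge 2$. Here the clean trick is to apply It\^o's formula to $\|v(\cdot,t)\|_{L^2_x}^p = \big(\|v(\cdot,t)\|^2_{L^2_x}\big)^{p/2}$ for $v=D^\be u$: writing $Y_t := \|v(\cdot,t)\|_{L^2_x}^2$, which is a continuous semimartingale with drift controlled (after the integration-by-parts above) by $-\mu\|v_x\|^2_{L^2_x} + (\text{terms in } f,g,v)$ and quadratic variation $d\langle Y\rangle_t \le C\, Y_t\,(\|v_x\|^2_{L^2_x}+\|g\|^2)\,dt$-type bounds, one raises to the power $p/2$, takes expectations, and controls the martingale term by BDG and Young's inequality so that the $\|v_x\|$-contribution on the right is absorbed into the good drift term. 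This yields $\E\,\sup_t Y_t^{p/2} + \E\big(\int_0^T\|v_x\|^2_{L^2_x}\,dt\big)^{p/2}$ bounded by the $p$th-power norms of the data, plus a Gronwall-type term in $u$ itself which is then reabsorbed. I expect this $L^p$-in-$\omega$ estimate to be the main obstacle: one must be careful that the It\^o correction term from $(\,\cdot\,)^{p/2}$ has the correct sign or is dominated, that the stochastic-parabolicity gap $\pc I_n$ in \eqref{parab} is quantitatively enough to absorb both the first-order stochastic term and the BDG contribution simultaneously, and that all constants depend only on $n,p,T,\pc$. Existence in the $L^p$ class then follows from the $p=2$ existence together with the a priori $L^p$ bound (the solution is the same; only the norm in which it is estimated changes), and uniqueness is immediate from linearity and the estimate with $f=g=0$.
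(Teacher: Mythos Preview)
Your plan is correct and matches the paper's argument in structure: $p=2$ by classical $L^2$-theory, induction in $m$ using that $a,\si$ are $x$-independent so $D^\be u$ solves the same model equation, and then the $L^p_\om$ upgrade via It\^o plus BDG.

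One simplification you are missing, and which the paper exploits, removes the ``main obstacle'' you flag. Instead of applying It\^o to $Y_t^{p/2}$, the paper applies It\^o only to $Y_t=\|u(t)\|_{L^2_x}^2$, obtains a pathwise inequality of the form
\[
\|u(t)\|_{L^2_x}^2+\int_0^t\|u\|_{H^1_x}^2\,ds \le C\!\int_0^t\bigl(\|f\|_{H^{-1}_x}^2+\|g\|_{L^2_x}^2\bigr)\,ds + C\Bigl|\int_0^t\!\!\int u\,g^k\,dx\,d\BM^k_s\Bigr|,
\]
takes $\sup_t$, raises to the power $p/2$, and \emph{then} takes expectation and applies BDG. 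The crucial point is that the martingale term contains only $\int u\,g^k\,dx$: the contribution $\int u\,\sik u_{x^i}\,dx$ vanishes identically by integration by parts since $\si$ is independent of $x$. Hence BDG yields a term bounded by $\tfrac12\|u\|_{L^p_\om L^\infty_t L^2_x}^p + C\|g\|_{L^p_\om L^2_t L^2_x}^p$, with no $u_x$ to absorb and no interaction with the parabolicity gap. So there is no It\^o correction from the power to control and no simultaneous-absorption issue; the parabolicity gap $\pc I_n$ is used only once, in the drift. Your route via It\^o on $Y_t^{p/2}$ also works, but this observation makes the computation essentially one line. The a priori finiteness \eqref{pre.ass} needed to run the argument is handled by a standard stopping-time localisation, as the paper notes.
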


\begin{proof}
The special case of $p=2$ follows from the $L^2$-theory of SPDEs, for instance, 
see \cite[Theorem~4.1.2]{rozovskii1990stochastic}.
We prove the general cases of $p\geq2$ by induction of $m$. 
Thus it suffices to show the estimate \eqref{p22est} for $m = 0$,
namely
\begin{equation}\label{p22est0}
\|u\|_{L^p_\om L^\infty_t L^2_{x}}
+ \|u\|_{L^p_\om L^2_t H^1_{x}} 
\le C (\|f\|_{L^p_\om L^2_t H^{-1}_{x}} 
+ \|g\|_{L^p_\om L^2_t L^2_{x}}).
\end{equation}

Take a stopping time $\tau:\PS \to [0,T]$ such that
\begin{equation}\label{pre.ass}
\E\biggl[
\biggl( \sup_{t\in[0,\tau]} \int_{\R^n} |u(x,t)|^2 
\vd x 
+ \int_0^\tau\!\!\int_{\R^n} |u_x(x,t)|^2 \vd x \md t
\biggr)^{\!\frac{p}{2}}
\bigg]
< \infty.
\end{equation}
By the It\^o formula (cf.~\cite[Theorem~4.2.2]{rozovskii1990stochastic}) and integration by parts,
\begin{align*}
\|u(t)\|^2_{L^2_x} 
& = \int_0^t\!\!\int_{\R^n} \Bigl[
-2\aij u_\di u_\dj
+ \|\si^{i} u_\di + g\|_{\ell^2}^2 \Bigr] \vd x \md s \\
&\quad + 2\int_0^t \langle u(t), f(t) \rangle \vd t
+ 2\int_0^t\!\!\int_{\R^n} ug^k \vd x \md \BM^k_s,
\end{align*}
where $\langle \cdot, \cdot\rangle$ denotes the duality product between and $H^1_x$ and $H^{-1}_x$.
By the parabolic condition~\eqref{parab} and using Young's inequality, we have
\begin{align*}
& \sup_{t\in[0,\tau]}\|u(t)\|^2_{L^2_x} + \int_0^\tau \|u(t)\|^2_{H^1_x} \vd t \\
\nonumber & \le C \int_0^\tau \bigl[
\|f(t)\|^2_{H^{-1}_x} + \|g(t)\|^2_{L^2_x}\bigr] \md t
+ C \left\vert \sup_{t\in[0,\tau]}
\int_0^t\!\!\int_{\R^n} ug^k \vd x \md \BM^k_s \right\vert.
\end{align*}
Then computing $\E[\,\cdot\,]^{p/2}$ 
on both sides gives us that
\begin{align} \label{eq301}
& \E\biggl[
\biggl( \sup_{t\in[0,\tau]}\|u(t)\|^2_{L^2_x} + \int_0^\tau \|u(t)\|^2_{H^1_x} \vd t
\biggr)^{\!\frac{p}{2}} \biggr]\\
\nonumber & \le C \bigl(\|f\|^p_{L^p_\om L^2_t H^{-1}_{x}(\Q_T)} 
+ \|g\|^p_{L^p_\om L^2_t L^2_{x}(\Q_T)} \bigr)
+ C \E\biggl[\sup_{t\in[0,\tau]}
\left\vert\int_0^t\!\!\int_{\R^n} ug^k \vd x \md \BM^k_s \right\vert^{\frac{p}{2}}\biggr],
\end{align}
where $C=C(\pc,T)$.
By the Burkholder--Davis--Gundy inequality (see \cite{revuz1999continuous}), 
the last term is dominated by
\begin{align}\label{newBDG}
& C\E\biggl[
\biggl( \int_0^\tau 
\sum_{k=1}^{\infty} \biggl\vert\int_{\R^n} ug^k 
\vd x\biggr\vert^2 \md t \biggr)^{\frac{p}{4}}\biggr] \nonumber\\
& \le C \E\biggl[
\biggl( \sup_{t\in[0,\tau]}\int_{\R^n} |u(x,t)|^2 
\vd x \biggr)^{\frac{p}{4}}
\biggl(\int_0^\tau\!\!\int_{\R^n} \|g(x,t)\|_{\ell^2}^2 
\vd x \md t\biggr)^{\frac{p}{4}}\biggr] \\
& \le \eps\, \E \sup_{t\in[0,\tau]}\|u(t)\|^p_{L^2_x}
+ C\eps^{-1}\|g\|_{L^p_\om L^2_t L^2_{x}(\Q_T)}^p. \nonumber
\end{align}
Taking the positive number $\eps$ sufficiently small and combining \eqref{newBDG} along with \eqref{eq301}, we thus obtain that
\begin{align*}
\E\biggl[
\biggl( \sup_{t\in[0,\tau]}\|u(t)\|^2_{L^2_x} + \int_0^\tau \|u(t)\|^2_{H^1_x} \vd t
\biggr)^{\!\frac{p}{2}} \biggr]
\le C \bigl(\|f\|^p_{L^p_\om L^2_t H^{-1}_{x}(\Q_T)} 
+ \|g\|^p_{L^p_\om L^2_t L^2_{x}(\Q_T)} \bigr),
\end{align*}
where the constant $C$ depends only on $\la$ and $T$.
Then \eqref{p22est0} follows
by applying the above estimate to the following sequence of stopping times
\[
\tau_k := \inf \biggl\{t\ge 0 : 
\sup_{s\in[0,t]} \|u(s)\|^2_{L^2_x} + \int_0^t \|u(s)\|^2_{H^1_x} \vd s
> k \biggr\} \wedge T,
\]
and sending $k$ to infinity.
For $m\geq1$, one can easily apply the induction argument to conclude the lemma. 
\end{proof}

\begin{proof}[Proof of Proposition \ref{local}]
Now we are ready to prove the first estimate in Proposition \ref{local}.
It suffices to consider the case $r=1$.
For general $r>0$, we can apply the obtained estimates for $r=1$ to the rescaled function
\[
v(x,t) := u(rx, r^2 t),
\quad
\forall\,(x,t)\in\R^n\x\R
\]
that solves the equation
\begin{equation}
\md v(x,t) = [\aij(r^2 t) v_\dij(x,t) + F(x,t)]\vd t 
+ [\sik(r^2 t) v_\di(x,t) + G^k(x,t)] \vd \be^k_t,
\end{equation}
with
\[
F(x,t) = r^2 f(rx, r^2 t),
\quad
G(x,t) = r g(rx, r^2 t),
\quad
\be^k_t = r^{-1} \BM^k_{r^2t},
\]
and obviously, $\be^k$ are mutually independent Wiener processes.

By induction, we shall only consider the case of $m=1$.
Let $\zeta \in C_0^\infty(\R^{n+1})$ be a nonnegative function 
such that $\zeta(x,t)= 1$ if $\pdist{(x,t)} \le \sqrt{\tht}$, where $\tht\in(0,1)$,
and $\zeta(x,t) = 0$ if $\pdist{(x,t)} \ge (1+\sqrt{\tht})/2$.
Then $v=\zeta u$ satisfies
\begin{equation}\label{mdf01}
\md v = (\aij \Dij v + \tilde{f} )\vd t + (\sik \Di v + \tilde{g}^k) \vd \BM^k_t,
\end{equation}
where
\[
\tilde{f} = \zeta f - 2 \aij (\zeta_\di u)_\dj + \aij\zeta_\dij u + \zeta_t u,
\quad
\tilde{g}^k = \zeta g^k - \sik \zeta_\di u.
\]
Applying Lemma \ref{p2m} to \eqref{mdf01} with $|\be|=0$, we have
\begin{align}
\label{3001}& \|u\|_{L^p_\om L^\infty_t L^2_{x}(Q_{\sqrt{\tht}})}
+ \|D u\|_{L^p_\om L^2_t L^2_{x}(Q_{\sqrt{\tht}})} \\
\nonumber & \quad \le C \bigl( \|u\|_{L^p_\om L^2_t L^2_{x}(Q_{1})}
+ \|f\|_{L^p_\om L^2_t L^2_{x}(Q_{1})}
+ \|g\|_{L^p_\om L^2_t L^2_{x}(Q_{1})} \bigr).
\end{align}

While by choosing another cut-off function $\zeta$ such that 
$\zeta(x,t)= 1$ if $\pdist{(x,t)} \le \tht$,
and $\zeta(x,t) = 0$ if $\pdist{(x,t)} \ge \sqrt{\tht}$,
and again applying Lemma \ref{p2m} with $|\be|=1$, we have
\begin{align}
\label{3002} & \|D u\|_{L^p_\om L^\infty_t L^2_{x}(Q_{\tht})}
+ \|D^2 u\|_{L^p_\om L^2_t L^2_{x}(Q_{\tht})} \\
\nonumber & \quad \le C \bigl( \|D u\|_{L^p_\om L^2_t L^2_{x}(Q_{\sqrt{\tht}})}
+ \|f\|_{L^p_\om L^2_t L^2_{x}(Q_{1})}
+ \|g\|_{L^p_\om L^2_t H^1_{x}(Q_{1})} \bigr).
\end{align}
Combining \eqref{3001} and \eqref{3002}, the first inequality in Proposition \ref{local} is proved.
\end{proof}

As an immediate application, we give an estimate for equation \eqref{model}
with the Dirichlet boundary conditions
\begin{equation}\label{dirich}
u(0,\cdot) = 0,\quad
u|_{\pd B_r} = 0.
\end{equation}
Denote $\Q_r = B_r \x (0,r^2)$.

\begin{proposition}\label{Lpdom}
Let $f$ and $g$ be in $L^\p_\om L^2_t H^m_{x}(\Q_{r})$ for all $m\in\Nat$.
Then the Dirichlet problem \eqref{model} and \eqref{dirich} 
has a unique weak solution $u\in L^2_\om L^2_t H^1_x (\Q_{r})$,
and for each $t\in(0,r^2)$, $u(t,\cdot)\in L^\p(\PS; C^{m}(B_\eps))$ for all $m\ge0$ and $\eps\in (0,r)$.
Moreover, there is a constant $C = C(n,\p)$ such that
\begin{align}\label{Lp}
\|u\|_{L^\p_\om L^2_t L^2_{x}(\Q_r)}
\le C \bigl( r^{2} \|f\|_{L^\p_\om L^2_t L^2_{x}(\Q_r)} 
+ r \|g\|_{L^\p_\om L^2_t L^2_{x}(\Q_r)} \bigr).
\end{align}
\end{proposition}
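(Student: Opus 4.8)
The plan is to establish existence, regularity, and the weighted bound \eqref{Lp} separately. First I would treat existence and uniqueness of the weak solution $u\in L^2_\om L^2_t H^1_x(\Q_{r,T})$ for the Dirichlet problem \eqref{model}, \eqref{dirich}. Since $f,g\in\Sm$ are smooth and the coefficients $\aij,\sik$ are $x$-independent and satisfy the parabolic condition \eqref{parab}, this falls squarely within the classical $L^2$-theory of SPDEs on a bounded domain with Dirichlet boundary conditions (the Hilbert-space variational framework à la Rozovsky \cite{rozovskii1990stochastic}); I would quote that for existence, uniqueness, and the basic energy estimate on $\Q_{r,T}$. The interior spatial regularity statement --- that $u(t,\cdot)\in L^\p(\PS;C^m(B_\eps))$ for every $m$ and $\eps<r$ --- I would obtain by a localization/bootstrap argument: on balls compactly contained in $B_r$ the boundary plays no role, so Proposition \ref{local} applies to $u$ restricted to interior parabolic cylinders $Q_\rho(x_0,t)\subset \Q_{r,T}$. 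Iterating Proposition \ref{local} in $m$ upgrades the interior Sobolev regularity of $u$ in the $L^\p_\om$-scale arbitrarily high, and then Sobolev embedding in the spatial variable (already packaged as the ``Consequently'' clause of Proposition \ref{local}) gives the claimed $L^\p(\PS;C^m(B_\eps))$ regularity on each time slice.

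For the quantitative estimate \eqref{Lp}, I would use the scaling $v(x,t)=u(rx,r^2t)$ exactly as in the proof of Proposition \ref{local}: $v$ solves the same type of model equation on $\Q_1=B_1\times(0,1)$ with zero Dirichlet data on $\partial B_1$, driven by $F=r^2f(rx,r^2t)$, $G=rg(rx,r^2t)$ and rescaled independent Wiener processes. So it suffices to prove $\|v\|_{L^\p_\om L^2_tL^2_x(\Q_1)}\le C(\|F\|_{L^\p_\om L^2_tL^2_x(\Q_1)}+\|G\|_{L^\p_\om L^2_tL^2_x(\Q_1)})$, and then undo the scaling, noting that the Jacobian factors in the $L^2_tL^2_x$ norms convert the $r^2$ and $r$ multipliers on $F,G$ into $r^{2}$ and $r$ multipliers on $f,g$ as stated (and the overall $r$-power on the left matches up). The unit-scale bound itself I would get by an It\^o/energy argument on the bounded domain, parallel to Lemma \ref{p2m}: apply It\^o's formula to $\|v(t)\|_{L^2_x}^2$, integrate by parts (the Dirichlet condition kills boundary terms), use the parabolic condition \eqref{parab} and the Poincar\'e inequality on $B_1$ to absorb $\|v\|_{H^1_x}^2$ and to control the zeroth-order contribution, and estimate the martingale term by Burkholder--Davis--Gundy followed by Young's inequality exactly as in \eqref{newBDG}. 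Raising to the $p/2$ power and taking expectations then yields the desired $L^\p_\om$ bound; Poincar\'e is what lets $L^2_x$ be bounded directly by $H^1_x$ so that no $\|v\|_{L^2_tL^2_x}$ term survives on the right.

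The main obstacle I anticipate is bookkeeping rather than conceptual: matching the powers of $r$ in \eqref{Lp} correctly after the change of variables, and making sure the energy argument on the bounded domain genuinely gives a constant $C=C(n,\p)$ independent of $\pc$ and of $T$ --- the $\pc$-independence works because the Dirichlet setting lets us use coercivity only to produce a nonnegative $H^1_x$-term that we then discard (or, via Poincar\'e, bound below by a multiple of $\|v\|_{L^2_x}^2$ with a dimensional constant), so the lower ellipticity constant only helps and never appears with a negative power; and the $T$-independence is automatic because we work on the fixed cylinder $\Q_1$ of unit time-length. A secondary point requiring care is justifying the It\^o formula for $\|v(t)\|_{L^2_x}^2$ at the $L^2_x$ level on the bounded domain --- this is handled, as in Lemma \ref{p2m}, by first establishing the finiteness $\|v\|_{L^\p_\om L^\infty_tL^2_x}+\|v\|_{L^\p_\om L^2_tH^1_x}<\infty$ via a stopping-time argument, which in turn is legitimate here because $f,g\in\Sm$ are smooth and the interior regularity from the first part gives enough a priori smoothness to run the approximation.
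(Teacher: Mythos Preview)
Your proposal is essentially the paper's own approach: the paper cites \cite{krylov1994aw} for existence, uniqueness and interior smoothness, and then says the estimate \eqref{Lp} ``can be derived analogously to that of \eqref{p22est0} by means of It\^o's formula and rescaling'' --- exactly your plan of reducing to $\Q_1$ by the parabolic dilation and running the energy/BDG argument of Lemma~\ref{p2m}. Your use of Proposition~\ref{local} to get interior $L^\p(\PS;C^m)$ regularity is a reasonable substitute for the direct citation.

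One caveat: your argument for why the constant $C$ in \eqref{Lp} is independent of $\pc$ is not quite right. After It\^o and integration by parts the drift contributes $-(2a-\si\si^*)^{ij}v_iv_j + 2\sik v_i g^k + |g|^2$, and the cross term $2\sik v_i g^k$ cannot be absorbed without invoking either $\pc$ (via Young against the coercive term $(2a-\si\si^*)^{ij}v_iv_j\ge\pc|v_x|^2$) or a bound on $|\si|$ (which again comes from $\pc$ through $\si\si^*\le 2a\le\pc^{-1}I$). Simply ``discarding'' the nonnegative $H^1$-term leaves that cross term uncontrolled, and Poincar\'e does not help here. So the constant one actually obtains by the sketched argument depends on $\pc$; this is harmless for every application of Proposition~\ref{Lpdom} in the paper (all subsequent constants depend on $\pc$ anyway), but your justification of $\pc$-independence does not go through as written.
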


\begin{proof}
The existence, uniqueness and smoothness of the weak solution of 
the Dirichlet problem \eqref{model} and \eqref{dirich} 
follow from \cite[Theorem~2.1]{krylov1994aw}.
And the estimate \eqref{Lp} can be derived analogously to that of \eqref{p22est0}
by means of It\^o's formula and rescaling.
\end{proof}

\section{Interior H\"older estimates for the model equation}\label{sec4}

The aim of this section is to prove the interior H\"older estimates
for the model equation~\eqref{model} where $a$ and $\si$ are independent of $x$.
To be more general,
we assume that $f\in C^0_{x}(\R^n\x\R;\bL^\p)$ 
and $g\in C^{1}_{x}(\R^n\x\R;\bL^\p)$,
and $f(t,x)$ and $g_x(t,x)$ 
are Dini continuous with respect to $x$ uniformly in $t$, namely,
the modulus of continuity defined by
\[
\cm(r) = \sup_{t\in\R,\,|x-y| \le r} 
(\|f(t,x) - f(t,y)\|_{\bL^\p}+\|g_x(t,x) - g_x(t,y)\|_{\bL^\p})
\]
satisfies that
\[
\int_0^1 \frac{\cm(r)}{r} \vd r < \infty.
\]
Recall the notation $B_r$ and $Q_r$ defined in \eqref{BQ}.
The main estimate is the following
\begin{theorem}\label{Dini}
Let $u$ be a quasi-classical solution to \eqref{model}.
Under the above settings, there is a positive constant $C$, depending only on
$n,\pc$ and $\p$, such that for any $X,Y \in Q_{1/4}$,
\begin{equation}\label{Dini.est}
\|u_{xx}(X) - u_{xx}(Y)\|_{\bL^\p}
\le C \left[ \delta M_1
+ \int_0^{\delta} \frac{\cm(r)}{r}\vd r
+ \delta \int_{\delta}^1 \frac{\cm(r)}{r^2}\vd r
\right],
\end{equation}
where $\delta = \pdist{X-Y}$ and
$ M_1 = \norm{u}_{0;Q_{1}} + \norm{f}_{0;Q_{1}}
+ \norm{g}_{1;Q_{1}}$.
\end{theorem}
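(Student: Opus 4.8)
The plan is to follow the classical perturbation/Campanato scheme: freeze the coefficients at a point, compare $u$ with the solution of a constant-coefficient equation (here $a,\si$ are already independent of $x$, so the "freezing" is in the free terms $f,g$, replacing them by their values at the origin), and exploit the fact that derivatives of solutions to the homogeneous model equation are themselves solutions of the same equation, hence enjoy the interior regularity of Proposition \ref{local}. First I would reduce to a decay estimate: it suffices to show that for every $X_0\in Q_{1/4}$ and every pair of radii $0<\rho\le R\le 1/2$, one can bound the oscillation of $u_{xx}$ on $Q_\rho(X_0)$ in $\bL^\p$-norm by
\begin{equation*}
\Bigl(\frac{\rho}{R}\Bigr)^{\al}\,\bigl(\text{oscillation on }Q_R(X_0)\bigr) + C\,\cm(R) + C\,\frac{\rho}{R}\,R\,M_1\cdot(\text{something})
\end{equation*}
type inequality, after which a standard iteration lemma (summing a geometric-type series in dyadic radii, converting the sum over $R$ into the two integrals $\int_0^\delta \cm(r)/r\,\md r$ and $\delta\int_\delta^1 \cm(r)/r^2\,\md r$) yields \eqref{Dini.est} with $\delta=\pdist{X-Y}$.

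The core one-step estimate I would obtain as follows. Fix $X_0$ and $R$; let $f(X_0)$, $g_x(X_0)$ denote the frozen values, and let $h$ solve the model equation on $Q_R(X_0)$ with free terms $f\equiv f(X_0)$ (a constant in $x$, so contributing only a smooth-in-$x$ drift) and $g^k$ replaced by its first-order Taylor polynomial at $X_0$ — so that $h$, and all its spatial derivatives, solve the \emph{homogeneous} model equation $\md v=\aij v_\dij\,\md t+\sik v_\di\,\md\BM^k$ after differentiating twice. Apply Proposition \ref{local} (with $m$ large enough that $2(m-|\be|)>n$) to $D^2 h$ to get that $D^2h$ is Hölder (indeed $C^\infty$) in $x$ with oscillation on $Q_{\rho}(X_0)$ controlled by $(\rho/R)^{\al}$ times an $L^2_\om L^2_{t,x}$ quantity on $Q_R$, itself controlled by $\|h\|_{0;Q_R}$, hence by $M_1$ plus lower-order oscillation terms. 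The difference $w=u-h$ solves the model equation with right-hand side $f-f(X_0)$ in the drift and $g^k - (\text{Taylor poly of }g^k)$ in the diffusion; the spatial derivative of the latter is $g_x - g_x(X_0)$, which is bounded in $\bL^\p$ by $\cm(R)$ on $Q_R(X_0)$, and $f-f(X_0)$ is likewise bounded by $\cm(R)$. Feeding this into the energy/$L^\p_\om$ estimate for $w$ (Lemma \ref{p2m}, after a cutoff as in the proof of Proposition \ref{local}, or Proposition \ref{Lpdom}), one bounds $\|D^2 w\|_{L^\p_\om L^2_{t,x}(Q_{R/2})}$ by $C\cm(R)$ times an appropriate power of $R$; combined with a Sobolev embedding in the remaining spatial derivatives (again Proposition \ref{local} applied to $w$ at a slightly larger scale to upgrade $L^2_x$ to $\sup_x$) this gives $\|\sup_{Q_{\rho}(X_0)}|D^2 w|\|_{L^\p_\om}\le C\cm(R)$ up to scaling factors. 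Adding the $h$-part and the $w$-part, and absorbing, produces the one-step inequality.

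I would then run the iteration: choose $\rho=\theta R$ with $\theta\in(0,1)$ fixed so that $C\theta^{\al}<\tfrac12$, write $\phi(R):=\|\,\text{osc of }D^2u\text{ on }Q_R(X_0)\,\|_{\bL^\p}$-type quantity (more precisely the Campanato-type quantity $\inf_{P}\|D^2u-P\|$ over constants $P$, or simply $R^{-\al}$ times a modulus), and obtain $\phi(\theta R)\le \tfrac12\phi(R)+C\cm(R)+C R M_1$. Iterating over $R=\theta^j R_0$ and summing the geometric series yields, for any $\delta\le R\le 1/2$, a bound of the form $\phi(\delta)\le C\delta^{\al}\phi(R_0)+C\sum_{\theta^j R_0\ge\delta}\cm(\theta^jR_0)+C\delta\sum_{\theta^jR_0\ge\delta}\ldots$; standard comparison of these sums with integrals (using that $\cm$ is monotone) converts them to $\int_0^\delta\cm/r\,\md r+\delta\int_\delta^1\cm/r^2\,\md r$, while $\phi(R_0)\le C M_1$ by Proposition \ref{local} applied once at unit scale. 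Finally, translating from the Campanato-type quantity back to the pointwise oscillation $\|u_{xx}(X)-u_{xx}(Y)\|_{\bL^\p}$ with $\delta=\pdist{X-Y}$ is routine.

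The main obstacle I anticipate is the absence of a maximum principle: in the deterministic Wang/Safonov scheme one compares $u$ with $h$ pointwise and controls $\sup|u-h|$ by the maximum principle, but here one has only $\bL^\p$-valued bounds and must run the entire comparison through the $L^\p_\om L^2_t$ energy estimates of Lemma \ref{p2m} and Proposition \ref{local}. This forces the Sobolev embedding $2(m-|\be|)>n$ to be invoked repeatedly at cascading scales (losing derivatives of $g$, which is why $g\in C^{1+\al}_x$ and not merely $C^{\al}_x$ is needed), and requires care that the cutoff errors and the $L^2_x$-to-$\sup_x$ upgrades do not destroy the $(\rho/R)^{\al}$ decay — this is exactly the "integral-type estimate inspired by Trudinger" substituting for the maximum principle that the introduction alludes to. Keeping the scaling exponents of $R$ consistent across the drift term (which scales like $R^2$) and the diffusion term (which scales like $R$) throughout the iteration is the bookkeeping that must be done carefully.
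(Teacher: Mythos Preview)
Your proposal is correct and follows essentially the same route as the paper. The paper organises the argument slightly differently: rather than defining a Campanato-type functional $\phi(R)$ and iterating a one-step inequality, it constructs at each dyadic scale $\rho^\m$ the Dirichlet approximant $u^\m$ (your $h$) with $u^\m=u$ on $\partial_{\rm p}Q_{\rho^\m}$ and frozen data $f(0,t)$, $g(0,t)+g_x(0,t)\cdot x$, proves via Propositions~\ref{local} and~\ref{Lpdom} that $\norm{D^m(u^\m-u^{\m+1})}_{0;Q^{\m+2}}\le C\rho^{(2-m)\m}\cm(\rho^\m)$, telescopes to show $u^\m_{xx}(0)\to u_{xx}(0)$ with rate $\int_0^{\rho^\m}\cm/r\,\md r$, and then bounds $\|u_{xx}(X)-u_{xx}(Y)\|_{\bL^\p}$ by the triangle inequality through $u^{\tilde\m}_{xx}$ at the scale $\rho^{\tilde\m}\sim\delta$. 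This is Wang's approximation scheme written out explicitly, whereas you phrase it as a Campanato iteration; the content is the same, and your identification of the key inputs (no maximum principle, hence the $L^\p_\om L^2_{t,x}$ energy bounds of Lemma~\ref{p2m} and Proposition~\ref{Lpdom} in its place; freezing $f$ and the first-order Taylor polynomial of $g$; using Proposition~\ref{local} on the homogeneous equation satisfied by $D^2h$ and on the differences) is exactly right.
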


An immediate consequence is the following interior H\"older estimate for \eqref{model},
where we denote $\Q_{r,T} = B_r \x [0,T]$ for $r,T>0$.

\begin{corollary}
Let $u$ be a quasi-classical solution of \eqref{model}, and $\al\in(0,1)$.
Suppose that $u$, $f$ and $g$ vanish when $t\le 0$.
Then there is a positive constant $C$, depending only on
$n,\pc,\p$ and $\al$, such that
\begin{equation}\label{xholder}
\hnorm{u_{xx}}_{(\al,\al/2);\Q_{1/4,T}}
\le C \left[ \norm{u}_{0;\Q_{1,T}}
+ \frac{\norm{f}_{\al;\Q_{1,T}} + \norm{g}_{1+\al;\Q_{1,T}}}{\al(1-\al)}\right]
\end{equation}
for any $T>0$, provided the right-hand side is finite.
\end{corollary}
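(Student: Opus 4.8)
The plan is to derive the parabolic Hölder seminorm bound \eqref{xholder} from Theorem \ref{Dini} by choosing, for a given $\alpha\in(0,1)$, the specific modulus of continuity coming from the $C^\alpha_x$-norm of $f$ and the $C^{1+\alpha}_x$-norm of $g$, and then performing the two integrals in \eqref{Dini.est} explicitly. First I would normalise: replacing $u,f,g$ by suitable multiples, I may assume $\norm{f}_{\alpha;\Q_{1,T}}+\norm{g}_{1+\alpha;\Q_{1,T}}=1$ and $\norm{u}_{0;\Q_{1,T}}$ is controlled, so that on $Q_1$ (which, since $u,f,g$ vanish for $t\le 0$, is the relevant set intersected with $\{t\le 0\}$ being trivial) one has $\cm(r)\le r^\alpha$ and $M_1\le \norm{u}_{0;\Q_{1,T}}+C$. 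Strictly speaking Theorem \ref{Dini} is stated on the cube $Q_1$ with sup over all $t\in\R$ in the definition of $\cm$; because $f,g$ vanish for $t\le 0$, the spatial modulus over $t\in\R$ is still bounded by the modulus over $t\in[0,T]$, so $\cm(r)\le C r^\alpha(\norm{f}_{\alpha}+\norm{g}_{1+\alpha})$ and $M_1\le C(\norm{u}_0+\norm{f}_\alpha+\norm{g}_{1+\alpha})$, which is the right-hand side of \eqref{xholder} up to the $\alpha(1-\alpha)$ factor.

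Next I would plug $\cm(r)=r^\alpha$ into the bracket in \eqref{Dini.est}. The middle term gives $\int_0^\delta r^{\alpha-1}\vd r=\delta^\alpha/\alpha$. The last term gives $\delta\int_\delta^1 r^{\alpha-2}\vd r=\delta\cdot\frac{1-\delta^{\alpha-1}}{\alpha-1}=\frac{\delta-\delta^\alpha}{\alpha-1}=\frac{\delta^\alpha-\delta}{1-\alpha}\le \frac{\delta^\alpha}{1-\alpha}$. The first term $\delta M_1\le \delta^\alpha M_1$ since $\delta=\pdist{X-Y}\le\frac12$ and $\alpha<1$. Adding, the bracket is $\le C\delta^\alpha\bigl(M_1+\tfrac1\alpha+\tfrac1{1-\alpha}\bigr)\le C\delta^\alpha\bigl(M_1+\tfrac1{\alpha(1-\alpha)}\bigr)$. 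Dividing \eqref{Dini.est} by $\delta^\alpha=\pdist{X-Y}^\alpha$ and taking the supremum over $X\neq Y$ in $Q_{1/4}$ yields
\[
\hnorm{u_{xx}}_{(\alpha,\alpha/2);Q_{1/4}}
\le C\Bigl[\norm{u}_{0;Q_1}+\frac{\norm{f}_{\alpha;Q_1}+\norm{g}_{1+\alpha;Q_1}}{\alpha(1-\alpha)}\Bigr],
\]
which, after replacing the unit-time cube $Q_1$ by $\Q_{1,T}=B_1\x[0,T]$, is \eqref{xholder}.

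The one genuine point requiring care — and the only real obstacle — is the passage from the single time-cube $Q_{1/4}$ (time interval of unit-order length) in Theorem \ref{Dini} to the cylinder $\Q_{1/4,T}$ with arbitrary $T>0$. This is handled by translation invariance in time: for any $t_0$, applying Theorem \ref{Dini} to the translated solution $u(\cdot,\cdot+t_0)$ of \eqref{model} (whose coefficients $a,\si$ are merely translated in time and still satisfy \eqref{parab}, and whose data have the same moduli) controls $\|u_{xx}(X)-u_{xx}(Y)\|_{\bL^\p}$ for $X,Y$ in the translated cube $Q_{1/4}(0,t_0)$; since $t_0$ is arbitrary these cubes cover $\Q_{1/4,T}$, and for $X,Y$ at parabolic distance $\le\frac12$ we may pick $t_0$ so both lie in one such cube, while for $X,Y$ farther apart the bound follows trivially from the $C^0$-bound on $u_{xx}$ (itself a consequence of Theorem \ref{Dini} with $Y$ a reference point, or of Proposition \ref{local}). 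Thus the seminorm over $\Q_{1/4,T}$ is bounded by the same constant, uniformly in $T$. One should also note that the hypothesis "$u$, $f$, $g$ vanish for $t\le 0$'' is exactly what makes the translated data satisfy the Dini hypothesis of Theorem \ref{Dini} with the stated $\cm$, so nothing is lost; I would state this reduction in one or two sentences and then carry out the elementary integral computation above.
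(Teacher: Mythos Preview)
Your proposal is correct and follows essentially the same route as the paper: specialise the Dini modulus in Theorem~\ref{Dini} to $\cm(r)\le Cr^{\alpha}$, evaluate the two integrals in \eqref{Dini.est} to obtain the bound on a single cube $Q_{1/4}(X)$, and then translate the centre in time through $[0,T]$ to cover $\Q_{1/4,T}$. The paper's own proof is terser---it jumps directly to the translated estimate \eqref{pf401} and then says ``fix $x=0$ and let $t$ run through $[0,T]$''---so your explicit computation of $\int_0^\delta r^{\alpha-1}\vd r=\delta^\alpha/\alpha$ and $\delta\int_\delta^1 r^{\alpha-2}\vd r\le \delta^\alpha/(1-\alpha)$, together with your handling of the far-apart points via the $C^0$ bound on $u_{xx}$, merely fills in detail the paper omits.
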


\begin{proof}
It follows from \eqref{Dini.est} that for any $X=(x,t)\in \R^{n+1}$,
\begin{equation}\label{pf401}
\hnorm{u_{xx}}_{(\al,\al/2);Q_{1/4}(X)}
\le C \left[ \norm{u}_{0;Q_{1}(X)}
+ \frac{\norm{f}_{\al;Q_{1}(X)} + \norm{g}_{1+\al;Q_{1}(X)}}{\al(1-\al)}\right].
\end{equation}
Then we fix $x=0$ and let $t$ run through $[0,T]$; 
keeping in mind that $u$ vanishes when $t\le 0$, 
and using the localization property of H\"older norms (see \cite[Lemma~4.1.1]{krylov1996lectures}), we have
\begin{align*}
\hnorm{u_{xx}}_{(\al,\al/2);\Q_{1/4,T}}
& \le C(n,\alpha) 
\sup_{0\le t \le T}  \Big( \hnorm{u_{xx}}_{(\al,\al/2);Q_{1/4}(0,t)}
+ \norm{u}_{0;Q_{1/4}(0,t)}\Big)\\
& \le C \sup_{0\le t \le T} \left[ \norm{u}_{0;Q_{1}(0,t)}
+ \frac{\norm{f}_{\al;Q_{1}(0,t)} + \norm{g}_{1+\al;Q_{1}(0,t)}}{\al(1-\al)}\right]\\
& \le C \left[ \norm{u}_{0;\Q_{1,T}}
+ \frac{\norm{f}_{\al;\Q_{1,T}} + \norm{g}_{1+\al;\Q_{1,T}}}{\al(1-\al)}\right]. 
\end{align*}
The proof is complete.
\end{proof}

\begin{proof}[Proof of Theorem~\ref{Dini}]

Letting $\vf:\R^n \to \R$ be a nonnegative and symmetric mollifier (see Appendix) and 
$\vf^\eps(x)=\eps^n\vf(x/\eps)$, we 
define $u^\eps = \vf^\eps * u$, $f^\eps = \vf^\eps * f$ and $g^\eps = \vf^\eps * g$.
Under the condition of Theorem~\ref{Dini}, it follows from Corollary~\ref{corA5} (see Appendix) that
\begin{gather*}
\norm{f^\eps - f}_{0;\R^n} + \norm{g^\eps - g}_{1;\R^n} \to 0,\\
\|D^2 u^\eps(X)-D^2 u(X)\|_{\bL^\p} \to 0
\quad \forall\,X\in\R^n \times \R,
\end{gather*}
as $\eps\to 0$.
Evidently, $f^\eps$ and $Dg^\eps$ are also Dini continuous and has the same modulus of continuity $\cm$ with $f$ and $Dg$.
On the other hand, from Fubini's theorem one can check that $u^\eps$ satisfies the model equation~\eqref{model} in the classical sense with free terms $f^\eps$ and $g^\eps$.
Therefore, it suffices to prove the theorem for the mollified functions,
and the general case is straightforward by passing the limits.

Based on the above analysis and the property of mollified functions (see Lemmas \ref{lemA1} and \ref{lemA2} and Remark \ref{remA1} in Appendix), we may assume that $f$ and $g$ satisfy the following additional condition:
\begin{itemize}
\item[({\bf A})] $f,g\in L^\p_\om L^2_t H^k_x (Q_R)\cap C^k_x(Q_R;\bL^\p)$ for all $k\in\Nat$ and $R>0$. 
\end{itemize}
\smallskip

From the definition of $\cm$, one can see that for any $x,y\in\R^n$ and $t\in\R$,
\begin{equation}\label{fgappr}
\begin{split}
& \|f(x,t)-f(y,t)\|_{\bL^\p} + \|g_x(x,t) - g_x(y,t)\|_{\bL^\p}
\le \cm(|x-y|), \\
& \|g(x,t)-g(y,t)-g_x(x,t) \cdot (x-y)\|_{\bL^\p}
\le |x-y|\cm(|x-y|).
\end{split}
\end{equation}
With $\rho = 1/2$, we denote
\[
Q^\m = Q_{\rho^\m} = Q_{\rho^\m}(0,0),
\quad
\m = 0,1,2,\cdots.
\]

Let us introduce the following boundary problems:
\begin{align*}
\md u^\m & = [\aij u^\m_\dij + f(0,t) ]\vd t 
+ [\sik u^\m_\di + g^k(0,t) + g^k_x(0,t) \cdot x] \vd \BM^k_t
\quad \text{in } Q^\m, \\
u^\m & = u \quad \text{on } \pd_{\rm p} Q^\m,
\end{align*}
where $\pd_{\rm p} Q^\m$ denotes the parabolic boundary of the cylinder $Q^\m$ for $\m=0,1,2,\dots$.
Applying Proposition~\ref{Lpdom} to the equation of $u^\m - u $, we can obtain the the solvability and interior regularity of each $u^\m$.

\vspace{2pt}
Now, we {\em claim} that there is a constant $C = C(n,\pc,\p)$ such that 
\begin{equation}\label{diff}
\norm{D^m(u^\m - u^{\m+1})}_{0;Q^{\m+2}} 
\le C \rho^{(2-m)\m-m} \cm(\rho^\m), 
\quad 
m=1,2,\dots.
\end{equation}
To see this, we apply the second estimate in Proposition \ref{local} (with $f$ and $g$ vanishing) to $u^\m - u^{\m+1}$ with $|\beta|=m, r=\rho^{\m+1}, \theta=1/2$ and $p=\gamma$ to get
\[
\norm{D^m (u^\m - u^{\m+1})}_{0;Q^{\m+2}}
\le C \rho^{-m \m - m} 
\biggl{\|} \fint_{Q^{\m+1}} (u^\m - u^{\m+1})^2 \vd X \biggr{\|}_{\bL^{\p/2}}^{1/2}
=: I_{\m,m}.
\]
Here and in what follows, we denote $\fint_Q  = {1 \over |Q|}\int_Q$ with $|Q|$ being the Lebesgue measure of the set $Q\subset \R^{n+1}$.

On the other hand, it follows from Proposition~\ref{Lpdom} that
\[
J_\m := \biggl{\|} \fint_{Q^{\m}} (u^\m - u)^2 \vd X \biggr{\|}_{\bL^{\p/2}}^{1/2}
\le C \rho^{2\m} \cm(\rho^\m).
\]
Combining the above we obtain
\[
I_{\m,m} \le C \rho^{-m\m - m} (J_\m + J_{\m+1}) \le C \rho^{(2-m)\m - m}\cm(\rho^\m)
\]
and thus the claim \eqref{diff}.

\vspace{2pt}
The estimate~\eqref{diff} with $m=2$ gives (recalling $\rho=1/2$)
\[
\sum_{\m\ge 1}\norm{(u^\m - u^{\m+1})_{xx}}_{0;Q^{\m+2}} 
\le C \rho^{-2}\sum_{\m\ge 1} \cm(\rho^\m) 
\le 4C \int_0^1 \frac{\cm(r)}{r} \vd r
< \infty,
\]
which implies that $u^\m_{xx}(0)$ converges in $\bL^\p$ as $\m\to\infty$,
(here $0\in\R^{n+1}$).
We shall prove that the limit is $u_{xx}(0)$.
Since $\p\ge 2$, it suffices to show that
\begin{equation}\label{conv}
\lim_{\m\to\infty}\|u^\m_{xx}(0) - u_{xx}(0)\|_{\bL^2}
=0.
\end{equation}
Applying the second estimate in Proposition~\ref{local} to $u^\m - u$ with $m=n+2, |\beta|=2, r=\rho^\m, \theta=1/2$ and $p=2$, we have
\begin{gather*}
\sup_{Q^{\m+1}}\|u^\m_{xx} - u_{xx}\|_{\bL^2}^2
\le C \rho^{-4\m} \,\E \fint_{Q^\m} |u^\m - u|^2 \vd X \\
\quad + C \,\E\fint_{Q^\m}\bigl( |f(x,t) - f(0,t)|^2 
+ \|g(x,t) - g(0,t)\|^2 + \|g_x(x,t) - g_x(0,t)\|^2 \bigr)\vd X \\
\quad + C \sum_{k=1}^{n+1} \rho^{2\m k} 
\,\E\fint_{Q^\m}\bigl( |D^{k} f|^2 + \|D^{k+1} g\|^2 \bigr)\vd X.
\end{gather*}
According to the additional condition ({\bf A}) on $f$ and $g$, it is clear that
the last two terms on the right-hand side tend to zero as $\m \to \infty$.
Moreover, from Proposition~\ref{Lpdom} and \eqref{fgappr} we have
\begin{align*}
& \rho^{-4\m} \,\E \fint_{Q^\m} |u^\m - u|^2 \vd X \\
& \le C \,\E\fint_{Q^\m}\bigl( |f(x,t) - f(0,t)|^2 
+ \rho^{-2\m}|g(x,t) - g(0,t) - g_x(0,t)\cdot x|_{l^2}^2 \bigr)\vd X\\
& \le C \cm(\rho^\m)^2
\to 0,
\quad
\text{as }
\m \to \infty.
\end{align*}
Therefore, \eqref{conv} is proved and $u^\m_{xx}(0)$ converges strongly to $u_{xx}(0)$ in $\bL^\p$.
Moreover, by means of \eqref{diff}, we have
\begin{equation}\label{conv.rate}
\|u^\m_{xx}(0) - u_{xx}(0)\|_{\bL^\p}
\le \sum_{j\ge \m}\norm{(u^j - u^{j+1})_{xx}}_{0;Q^{j+2}} 
\le C \int_0^{\rho^\m} \! \frac{\cm(r)}{r} \vd r,
\end{equation}
where $C = C(n,\pc,\p)$.

\vspace{2pt}
Next we estimate the oscillation of $u^\m_{xx}$.
Starting from $\m=0$, $u_{xx}^0$ satisfies the following homogeneous equation:
\begin{align}\label{u0-eq}
\md u^0_{xx} & = \aij \Dij u^0_{xx} \vd t + \sik \Di u^0_{xx} \vd \BM^k_t
\quad \text{in } Q_{3/4}.
\end{align}
Using the second estimate in Proposition~\ref{local} (with $f$ and $g$ vanishing) to $u^0_{xx}$, we have
\begin{align*}
& \norm{D_x u^0_{xx}}_{0;Q_{1/4}} + \norm{D_x^2 u^0_{xx}}_{0;Q_{1/4}}
\le C \|u^0_{xx}\|_{\bL^\p L^2_t L^2_x(Q_{1/2})} \\
& \quad \le C (\|u^0_{xx} - u_{xx}\|_{\bL^\p L^2_t L^2_x(Q_{1/2})} 
+ \|u_{xx}\|_{\bL^\p L^2_t L^2_x(Q_{1/2})}).
\end{align*}
Then we apply the first estimate in Proposition~\ref{local} to $u$ to get
\[
\|u_{xx}\|_{\bL^\p L^2_t L^2_x(Q_{1/2})}
\le C (\|u\|_{\bL^\p L^2_t L^2_x(Q_{1})} 
+ \|f\|_{\bL^\p L^2_t L^2_x(Q_{1})} 
+ \|g\|_{\bL^\p L^2_t H^1_x(Q_{1})}),
\]
and to $u^0 - u$ along with Proposition~\ref{Lpdom},
\begin{align*}
\|u^0_{xx} - u_{xx}\|_{\bL^\p L^2_t L^2_x(Q_{1/2})} 
& \le C (\|u^0 - u\|_{\bL^\p L^2_t L^2_x(Q_{1})} 
+ \|f\|_{\bL^\p L^2_t L^2_x(Q_{1})} 
+ \|g\|_{\bL^\p L^2_t H^1_x(Q_{1})})\\
& \le C (\|f\|_{\bL^\p L^2_t L^2_x(Q_{1})} 
+ \|g\|_{\bL^\p L^2_t H^1_x(Q_{1})}).
\end{align*}
Therefore,
\begin{align*}
& \norm{D_x u^0_{xx}}_{0;Q_{1/4}} + \norm{D_x^2 u^0_{xx}}_{0;Q_{1/4}}\\
& \le C (\|u\|_{\bL^\p L^2_t L^2_x(Q_{1})} 
+ \|f\|_{\bL^\p L^2_t L^2_x(Q_{1})} 
+ \|g\|_{\bL^\p L^2_t H^1_x(Q_{1})}) 
\le C M_1.
\end{align*}
Hence, for $-1/16 < s\le t\le 0$ and $x\in B_{1/4}$, 
\begin{align*}
& \|u^0_{xx}(x,t) - u^0_{xx}(x,s)\|_{\bL^\p}
= \bigg{\|} \int_s^t \aij \Dij u^0_{xx} \vd \tau 
+ \int_s^t  \sik \Di u^0_{xx} \vd \BM^k_\tau \bigg{\|}_{\bL^\p}\\
& \quad
\le C \sqrt{t-s} (\norm{D u^0_{xx}}_{0;Q_{1/4}} + \norm{D^2 u^0_{xx}}_{0;Q_{1/4}})
\le C \sqrt{t-s} M_1,
\end{align*}
where $C = C(n,\pc,\p)$.
Thus, we obtain
\begin{equation}\label{u0est}
\|u_{xx}^0(X) - u_{xx}^0(Y)\|_{\bL^\p}
\le C M_1 \pdist{X-Y},
\quad\forall\,X,Y\in Q_{1/4}.
\end{equation}

\vspace{2pt}
To deal with $u^\m_{xx}$ with $\m \ge 1$, we denote
\[
h^\iota = u^\iota - u^{\iota-1},
\quad\text{for } \iota = 1,2,\dots,\m.
\]
Then $h^\iota$ satisfies 
\begin{align}\label{h0-eq}
\md h^\iota & = \aij h^\iota_\dij \vd t + \sik h^\iota_\di \vd \BM^k_t
\quad \text{in } Q^{\iota}.
\end{align}
By \eqref{diff} we have
\begin{align*}
\rho^{-\iota} \norm{D^3h^\iota}_{0;Q^{\iota+1}}
+ \norm{D^4 h^\iota}_{0;Q^{\iota+1}}
\le C \rho^{-2\iota}\cm(\rho^{\iota-1}).
\end{align*}
Hence, for $-\rho^{2(\m+1)} \le t \le 0$ and $|x| \le \rho^{\m+1}$,
\[
\|h^\iota_{xx}(x,0) - h^\iota_{xx}(0,0)\|_{\bL^\p}
\le  C \rho^{\m-\iota}\cm(\rho^{\iota-1})
\]
and
\begin{align*}
& \|h^\iota_{xx}(x,t) - h^\iota_{xx}(x,0)\|_{\bL^\p}
= \bigg{\|} \int_t^0 \aij \Dij h^\iota_{xx} \vd \tau 
+ \int_t^0  \sik \Di h^\iota_{xx} \vd \BM^k_\tau \bigg{\|}_{\bL^\p}\\
& \quad
\le C \rho^{2\m} \norm{D^4 h^\iota}_{0;Q^{\iota+1}}
+ C \rho^{\m} \,\norm{D^3 h^\iota}_{0;Q^{\iota+1}}
\le C \rho^{\m-\iota}\cm(\rho^{\iota-1}).
\end{align*}
Let $Y=(y,s)\in Q_{1/4}$, and $\tilde{\m}\in\Nat$ such that  
\[\delta := \pdist{Y} \in [\rho^{\tilde{\m}+2}, \rho^{\tilde{\m}+1}).\]
Combining the last two estimates and \eqref{u0est},
we can obtain 
\begin{equation*}\label{ukest}
\begin{split}
\|u^{\tilde{\m}}_{xx}(Y) - u^{\tilde{\m}}_{xx}(0)\|_{\bL^\p}
& \le \|u^{{\tilde{\m}}-1}_{xx}(Y) - u^{{\tilde{\m}}-1}_{xx}(0)\|_{\bL^\p}
+ \|h^{\tilde{\m}}_{xx}(Y) - h^{\tilde{\m}}_{xx}(0)\|_{\bL^\p} \\
& \le \|u^{0}_{xx}(Y) - u^{0}_{xx}(0)\|_{\bL^\p}
+ \sum_{\iota=1}^{\tilde{\m}} \|h^\iota_{xx}(Y) - h^\iota_{xx}(0)\|_{\bL^\p} \\
& \le CM_1 \rho^{{\tilde{\m}}+1}
+ C \sum_{\iota=1}^{\tilde{\m}} \rho^{{\tilde{\m}}-\iota}\cm(\rho^{\iota-1}) \\
& \le CM_1 \rho^{{\tilde{\m}}+2} + C \rho^{{\tilde{\m}}+2} 
\int_{\rho^{{\tilde{\m}}}}^1  \frac{\cm(r)}{r^2} \vd r \\
& \le C \delta M_1 + C  \delta
\int_{\delta}^1  \frac{\cm(r)}{r^2} \vd r.
\end{split}
\end{equation*}

\vspace{2pt}
By virtue of \eqref{conv.rate} we have the following decomposition 
\begin{align}
& \|u_{xx}(Y) - u_{xx}(0)\|_{\bL^\p}  \label{pf402}\\
 \le &\ \|u^{\tilde{\m}}_{xx} (Y) - u^{\tilde{\m}}_{xx} (0)\|_{\bL^\p}
+ \|u^{\tilde{\m}}_{xx} (0) - u_{xx} (0)\|_{\bL^\p}
+ \|u^{\tilde{\m}}_{xx} (Y) - u_{xx}(Y)\|_{\bL^\p} \nonumber\\
 \le &\ C \left[ \delta M_1 + \int_0^{4\delta} \frac{\cm(r)}{r}\vd r
+ \delta \int_\delta^{1} \frac{\cm(r)}{r^2}\vd r
\right]
+ \|u_{xx}^{\tilde{\m}} (Y) - u_{xx}(Y)\|_{\bL^\p}. \nonumber
\end{align}
It remains to estimate the last term in the above inequality.
To this end, we consider the sequence of equations
\begin{align*}
\md u^{Y,{{\m}}} & = [\aij u^{Y,{{\m}}}_\dij + f(y,t) ]\vd t 
+ [\sik u^{Y,{{\m}}}_\di + g^k(y,t) + g^k_x(y,t) \cdot x] \vd \BM^k_t
\quad \text{in } Q^{{\m}}(Y), \\
u^{Y,{{\m}}} & = u \quad \text{on } \pd_{\rm p} Q^{{\m}}(Y)
\quad \text{with } 
\m = 0,1,\dots,\tilde{\m}-1,\tilde{\m}+2,\dots;
\end{align*}
the equations associated with $\tilde{\m}$ and $\tilde{\m}+1$ are replaced by the following single equation
\begin{align*}
\md u^{Y,{\tilde{\m}}} & = [\aij u^{Y,{\tilde{\m}}}_\dij + f(y,t) ]\vd t 
+ [\sik u^{Y,{\tilde{\m}}}_\di + g^k(y,t) + g^k_x(y,t) \cdot x] \vd \BM^k_t
\quad \text{in } Q^{\tilde{\m}}(0), \\
u^{Y,{\tilde{\m}}} & = u \quad \text{on } \pd_{\rm p} Q^{\tilde{\m}}(0).
\end{align*}
As $\pdist{Y} \in [\rho^{\tilde{\m}+2}, \rho^{\tilde{\m}+1})$, it is easily seen that $Q^{\tilde{\m}+2}(Y)\subset Q^{\tilde{\m}}(0)\subset Q^{\tilde{\m}-1}(Y)$.
So analogously to proving \eqref{conv.rate} but only with mirror changes, one can derive 
\begin{equation}\label{conv.rateY}
\|u^{Y,{\tilde{\m}}}_{xx}(Y) - u_{xx}(Y)\|_{\bL^\p}
\le C \int_0^{\rho^{\tilde{\m}}} \! \frac{\cm(r)}{r} \vd r,
\end{equation}
where $C = C(n,\pc,\p)$.
On the other hand, applying Proposition~\ref{local} to 
the equation satisfied by $u^{Y,{\tilde{\m}}} - u^{\tilde{\m}}$, and using~\eqref{fgappr}, we have
\begin{align*}
\|u^{Y,{\tilde{\m}}}_{xx}(Y) - u^{\tilde{\m}}_{xx}(Y)\|_{\bL^\p} 
\le C \bigl(\|u^{Y,{\tilde{\m}}} - u^{\tilde{\m}}\|_{\bL^\p L^2_t L^2_x(Q^{\tilde{\m}})} + \cm(\delta)\bigr),
\end{align*}
while by Proposition~\ref{Lpdom},
\begin{align*}
\|u^{Y,{\tilde{\m}}} - u^{\tilde{\m}}\|_{\bL^\p L^2_t L^2_x(Q_{1})}
\le C \cm(\delta).
\end{align*}
Thus
\[
\|u_{xx}^{\tilde{\m}} (Y) - u_{xx}(Y)\|_{\bL^\p}
\le C \cm(\delta) + C \int_0^{4\delta} \! \frac{\cm(r)}{r} \vd r.
\]
Substituting the above estimate into \eqref{pf402}, we then complete the proof.
\end{proof}

\begin{remark}
Consider the model equation of divergence-form
\begin{equation}\label{model-div}
\md u = (\aij u_\dj + f^i )_\di\vd t + (\sik u_\di + g^k) \vd \BM^k_t.
\end{equation}
With the help of the following approximation sequence
\begin{align*}
\md u^\m & = \aij u^\m_\dij \vd t 
+ [\sik u^\m_\di + g^k(0,t)] \vd \BM^k_t
\quad \text{in } Q^\m, \\
u^\m & = u \quad \text{on } \pd_{\rm p} Q^\m,
\end{align*}
we can similarly obtain an interesting estimate
\begin{gather*}
\hnorm{u}_{(1+\al,\al/2);Q_{1/4}}
\le C \bigl(
\norm{u}_{0;Q_1}
+ \sum\nolimits_i \norm{f^i}_{\al;Q_1}
+ \norm{g}_{\al;Q_1}
\bigr),
\end{gather*}
provided the right-hand side is finite.
The result on the model equation \eqref{model-div} can help us establish a $C^{1+\al}$ estimate 
for more general equations, which will be discussed in a separate work.
\end{remark}

\section{Global H\"older estimates for general equations}\label{sec5}

This section is devoted to the proof of Theorem \ref{schauder1}.
First we state two technical lemmas.

\begin{lemma}\label{iterat}
Let $\vf$ be a bounded nonnegative function defined
on $[0,T]$ satisfying
\begin{equation}\label{lem501}
\vf(t) \le \tht \vf(s) + \sum_{i=1}^m A_i (s-t)^{-\delta_i},
\quad
\forall\,0\le t < s\le T,
\end{equation}
for some nonnegative constants $\tht,\delta_i$ and $A_i$ ($i=1,\dots,m$), where $\tht <1$.
Then
\[
\vf(0) \le C 
\sum_{i=1}^m A_i T^{-\delta_i},
\]
where $C$ depends only on $\delta_1,\dots,\de_m$ and $\tht$.
\end{lemma}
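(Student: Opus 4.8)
The plan is to run a standard iteration-with-geometric-decay argument. First I would set up a decreasing sequence of times that accumulates at $0$: fix $\kappa\in(\tht^{1/(2\delta_*)},1)$ where $\delta_* = \max_i\delta_i$ is chosen so that $\tht\kappa^{-2\delta_*}<1$ (if all $\delta_i=0$ this choice is trivial), and put $t_j = \kappa^{j} T$ for $j\ge 0$, so that $t_0 = T$ and $t_j\downarrow 0$. Applying \eqref{lem501} with $t = t_{j+1}$ and $s = t_j$ gives $s-t = t_j - t_{j+1} = \kappa^j T(1-\kappa)$, hence
\[
\vf(t_{j+1}) \le \tht\,\vf(t_j) + \sum_{i=1}^m A_i\,(1-\kappa)^{-\delta_i}\kappa^{-j\delta_i}T^{-\delta_i}.
\]
Iterating this recursion from $j = N-1$ down to $j=0$ and using $\vf\ge 0$, I would obtain
\[
\vf(t_N) \le \tht^N \vf(T) + \sum_{j=0}^{N-1}\tht^{N-1-j}\sum_{i=1}^m A_i (1-\kappa)^{-\delta_i}\kappa^{-j\delta_i}T^{-\delta_i}.
\]

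Next I would bound the double sum. Since $\vf$ is bounded, $\tht^N\vf(T)\to 0$ as $N\to\infty$. For the remaining term, swap the order of summation and, for each $i$, estimate $\sum_{j=0}^{N-1}\tht^{N-1-j}\kappa^{-j\delta_i}$. Because $\kappa$ was chosen so that $\tht\kappa^{-\delta_i}\le \tht\kappa^{-\delta_*}$ — wait, I should pick $\kappa$ so that $\tht\kappa^{-\delta_i}<1$ for every $i$, which is possible since there are finitely many $\delta_i$ and $\tht<1$; call this common ratio bound $\tht' := \max_i \tht\kappa^{-\delta_i}<1$. Then $\sum_{j=0}^{N-1}\tht^{N-1-j}\kappa^{-j\delta_i} = \tht^{-1}\sum_{j=0}^{N-1}\tht^{N-j}\kappa^{-j\delta_i}\le \tht^{-1}\sum_{j\ge 1}(\tht')^{j} \cdot(\text{const})$, which is a convergent geometric series bounded by a constant $C$ depending only on $\tht$ and the $\delta_i$. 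This yields, uniformly in $N$,
\[
\vf(t_N) \le \tht^N\vf(T) + C\sum_{i=1}^m A_i(1-\kappa)^{-\delta_i}T^{-\delta_i}.
\]

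Finally, letting $N\to\infty$: the first term vanishes, and the left side tends to $\vf(0)$ provided $\vf$ is continuous at $0$. Here is the one point requiring care — the lemma only assumes $\vf$ is bounded, not continuous. To handle this I would avoid taking a limit and instead argue directly: apply the recursion a fixed but arbitrary number of steps, or observe that \eqref{lem501} applied with $t=0$, $s = t_1$ gives $\vf(0)\le\tht\vf(t_1) + \sum_i A_i t_1^{-\delta_i}$, and $\vf(t_1)$ is already controlled by the displayed bound (run the iteration starting from $t_1$ rather than $0$). Absorbing constants, this produces $\vf(0)\le C\sum_{i=1}^m A_i T^{-\delta_i}$ with $C = C(\delta_1,\dots,\delta_m,\tht)$, since $t_1 = \kappa T$ contributes only a further factor depending on $\kappa$, hence on the $\delta_i$ and $\tht$. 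The main obstacle is precisely this last bookkeeping step: choosing $\kappa$ uniformly so that $\tht\kappa^{-\delta_i}<1$ for all $i$ simultaneously, and then converting the estimate at the accumulation sequence into an estimate at $t=0$ without assuming continuity of $\vf$. Everything else is routine summation of geometric series.
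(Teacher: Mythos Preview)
Your iteration is set up in the wrong direction, and this causes a genuine gap. With $t_j=\kappa^j T$ decreasing to $0$ you obtain
\[
\vf(t_N)\le \tht^N\vf(T)+\sum_{j=0}^{N-1}\tht^{N-1-j}\sum_{i=1}^m A_i(1-\kappa)^{-\de_i}\kappa^{-j\de_i}T^{-\de_i},
\]
but the inner sum $S_N:=\sum_{j=0}^{N-1}\tht^{N-1-j}\kappa^{-j\de_i}$ is \emph{not} bounded uniformly in $N$ when $\de_i>0$: the ratio of consecutive terms is $\tht^{-1}\kappa^{-\de_i}>1$, so the sum is dominated by its last term and in fact $S_N\sim \kappa^{-(N-1)\de_i}/(1-\tht\kappa^{\de_i})\to\infty$. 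Your claimed bound $S_N\le \tht^{-1}\sum_{j\ge 1}(\tht')^j\cdot(\text{const})$ would require $\tht^{N-j}\kappa^{-j\de_i}\le C(\tht\kappa^{-\de_i})^j$, i.e.\ $\tht^{N-2j}\le C$, which fails for $j$ near $N$. Consequently the displayed ``uniform in $N$'' estimate for $\vf(t_N)$ is false, and your subsequent workaround (bounding $\vf(0)$ via $\vf(t_1)$) does not help either, since your iteration never produces a useful bound on $\vf(t_1)=\vf(\kappa T)$.

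The remedy is to reverse the direction: take an \emph{increasing} sequence starting at $0$, say $t_0=0$ and $t_{j+1}-t_j=(1-\tau)\tau^jT$ with $\tau\in(\tht^{1/\de_*},1)$, so that $t_j\uparrow T$. Applying \eqref{lem501} with $t=t_j$, $s=t_{j+1}$ and multiplying by $\tht^j$ gives the telescoping bound
\[
\vf(0)\le \tht^k\vf(t_k)+\sum_{j=0}^{k-1}\tht^j\sum_{i=1}^m A_i\bigl((1-\tau)\tau^j T\bigr)^{-\de_i}.
\]
Now the geometric factors align correctly: $\sum_j(\tht\tau^{-\de_i})^j$ converges because $\tau$ was chosen so that $\tht\tau^{-\de_i}<1$, while $\tht^k\vf(t_k)\to 0$ by boundedness of $\vf$. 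This is exactly the paper's argument, and it also dissolves your continuity worry entirely, since $\vf(0)=\vf(t_0)$ sits directly at the start of the telescoping chain rather than at a limit point.
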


\begin{proof}
We may suppose $T=1$, otherwise let $\tilde{\vf}(t) = \vf(Tt)$.
Then \eqref{lem501} implies 
\[
\vf(t) \le \tht \vf(s) + A (s-t)^{-\de},
\quad
\forall\,0\le t < s\le 1,\]
where $\de:= \max_{1\le i\le m} \de_i$ and $A := A_1+\cdots+A_m$.
It suffices to consider $\de > 0$.
Take $\tau\in (0,1)$ such that $\eps:=\tht \tau^{-\de} < 1$,
and set $t_0 = 0$, $t_{j+1} = t_j + (1-\tau)\tau^j$ for $j=0,1,\dots$.
Then
\begin{align*}
\tht^{j}\vf(t_{j})
\le \tht^{j+1} \vf(t_{j+1}) + A (1-\tau)^{-\de}(\tht\tau^{-\de})^j
= \tht^{j+1} \vf(t_{j+1}) + \eps^j A (1-\tau)^{-\de}.
\end{align*}
By iteration, we gain
\begin{align*}
\vf(0)
& \le \tht^k \vf(t_k) + (1+\eps+\cdots+\eps^{k-1})A (1-\tau)^{-\de}\\
& \le \tht^k \vf(t_k) + (1-\eps)^{-1}A (1-\tau)^{-\de}.
\end{align*}
By letting $k\to\infty$, we conclude the proof.
\end{proof}

\begin{lemma}\label{interp}
Let $B_R = \{x\in\R^n:|x|<R\}$ with $R>0$, $E$ be a Banach space, $p \ge 1$, and $0\le s < r$. 
There exists a positive constant $C$, depending only on $n$ and $p$,
such that
\begin{gather}\label{itpl.est}
[u]^E_{s;B_R} \le C\eps^{r-s} [u]^E_{r;B_R} + C \eps^{-s-n/p} \|u\|_{L^p(B_R;E)}
\end{gather}
for any $u\in C^r(B_R;E)$ and $\eps \in (0,R)$.
\end{lemma}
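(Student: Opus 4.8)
The plan is to build \eqref{itpl.est} from two elementary ingredients: a two-regime splitting of the H\"older quotient according to whether the two points are closer or farther than $\eps$, and a single \emph{averaging estimate} that trades pointwise (sup) information for the $L^p(B_1;E)$ norm; one then iterates over the finitely many derivative levels between $s$ and $r$. As a preliminary reduction, note that $[\,\cdot\,]^E_{\beta;B_1}=\sup_{\|e^*\|_{E^*}\le1}[\,e^*\circ(\cdot)\,]_{\beta;B_1}$ while post-composition with $e^*$ does not increase any norm appearing in \eqref{itpl.est}, so it suffices to treat scalar $u$; alternatively every step below is insensitive to $E$. Write $s=k+\sigma$ and $r=l+\tau$ with $k\le l$ integers and $\sigma,\tau\in[0,1)$, so that $[u]^E_{s;B_1}$ is $[D^k u]_{\sigma;B_1}$ (with the convention that this is $|D^k u|^E_{0;B_1}$ when $\sigma=0$).

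\textbf{Averaging estimate.} Fix $\psi\in C_c^\infty$ with $\int\psi=1$ and, for $x\in B_1$ and $\rho\in(0,1]$, set $\psi_{x,\rho}(y)=\rho^{-n}\psi((y-x)/\rho)$, recentred slightly inward so that $\operatorname{supp}\psi_{x,\rho}\subset B_1$ while the centre stays within $C\rho$ of $x$ (this uses the interior cone property of the ball and is the only genuinely delicate point). Then for each integer $j\ge0$, integrating by parts $j$ times,
\[
\|D^j u(x)\|\le\Bigl\|\int\bigl(D^j u(x)-D^j u\bigr)\psi_{x,\rho}\Bigr\|+\Bigl\|\int u\,D^j\psi_{x,\rho}\Bigr\|\le C\,\omega_{D^j u}(C\rho)+C\rho^{-j-n/p}\|u\|_{L^p(B_1;E)},
\]
where $\omega_{D^j u}$ is the modulus of continuity of $D^j u$ on $B_1$ and the last term uses $\|D^j\psi_{x,\rho}\|_{L^{p'}}=C\rho^{-j-n/p}$ together with H\"older's inequality. (For $j=0$ one may instead average over $B_1\cap B_\rho(x)$, whose measure is $\ge c_n\rho^n$.)

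\textbf{Combining and iterating.} For $D^l u\in C^\tau$ one has, by Taylor's formula, $\omega_{D^k u}(h)\le C\sum_{i=1}^{l-k}h^i|D^{k+i}u|^E_{0;B_1}+C\,h^{r-k}[u]^E_{r;B_1}$ for $h\le1$. Now take $x\ne y$ in $B_1$, put $h=|x-y|$, and fix $\eps\in(0,1)$. If $h>\eps$, then $\|D^ku(x)-D^ku(y)\|/h^\sigma\le\eps^{-\sigma}(\|D^ku(x)\|+\|D^ku(y)\|)$; apply the averaging estimate at level $k$ with $\rho=\eps$, insert the Taylor bound for $\omega_{D^k u}(C\eps)$, use the averaging estimate again to control each $|D^{k+i}u|^E_{0;B_1}$ by $C\eps^{\,l-k-i}|D^l u|^E_{0;B_1}+C\eps^{-(k+i)-n/p}\|u\|_{L^p}$, and finally bound $|D^l u|^E_{0;B_1}$ by $C\eps^\tau[u]^E_{r;B_1}+C\eps^{-l-n/p}\|u\|_{L^p}$ (the averaging estimate once more, with $\omega_{D^l u}(C\eps)\le C\eps^\tau[u]^E_{r;B_1}$). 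If instead $h\le\eps$, bound $\|D^ku(x)-D^ku(y)\|/h^\sigma\le h^{-\sigma}\omega_{D^k u}(h)$ and run the same Taylor expansion and iteration with $h\le\eps$. In both regimes the accumulated powers of $\eps$ collapse—as they are forced to by scaling—to $C\eps^{r-s}$ on the $[u]^E_{r;B_1}$ term and $C\eps^{-s-n/p}$ on the $\|u\|_{L^p(B_1;E)}$ term; the iteration terminates after at most $l-k+1\le\lceil r\rceil-\lfloor s\rfloor+1$ steps. Taking the supremum over $x\ne y$ gives \eqref{itpl.est}.

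\textbf{Expected obstacle.} The substantive point is the boundary of $B_1$: for $x$ near $\partial B_1$ the bump $\psi_{x,\rho}$ would leave the domain where $u$ is undefined, so one must recentre it inward (or use one-sided cones), verifying that this costs only absolute constants; this is the step where the geometry of $B_1$ (its interior cone condition) is used. The remaining work—the Taylor remainder estimate and the tracking of powers of $\eps$ through the finite iteration—is routine, if somewhat lengthy, and I would carry it out once carefully and then state that the exponents match.
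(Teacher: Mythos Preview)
Your argument is correct, but it takes a different and considerably longer route than the paper's. The paper decouples the two ingredients: it first invokes the \emph{classical} H\"older interpolation $[u]^E_{s;B_1}\le C\eps^{r-s}[u]^E_{r;B_1}+C\eps^{-s}|u|^E_{0;B_1}$ as a black box (citing \cite[Lemma~6.35]{gilbarg2001elliptic}), and then the only new work is to upgrade $|u|^E_{0;B_1}$ to $\|u\|_{L^p(B_1;E)}$. That upgrade is done in one line by averaging over a ball $B_\eps(y)\subset B_1$ near $x$:
\[
\|u(x)\|_E\le\fint_{B_\eps(y)}\|u(x)-u(z)\|_E\,dz+\fint_{B_\eps(y)}\|u(z)\|_E\,dz\le C\eps^{r}[u]^E_{r;B_1}+C\eps^{-n/p}\|u\|_{L^p(B_1;E)}
\]
when $r\le1$; for $r>1$ a two-line bootstrap (feed $[u]^E_{1;B_1}\le C\eps_1^{r-1}[u]^E_{r;B_1}+C\eps_1^{-1}|u|^E_{0;B_1}$ back and absorb) closes it. The general $s$ then follows by substituting this $s=0$ estimate into the classical inequality.

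Your approach instead rebuilds the classical interpolation and the $L^p$ upgrade simultaneously via the mollifier identity with integration by parts, the two-regime split $h\lessgtr\eps$, and a Taylor expansion iterated down through the derivative levels. What this buys you is self-containment and a mechanism that generalises cleanly; what the paper's route buys is brevity and a clear isolation of the genuinely new content (only the $s=0$ averaging step). Your boundary concern---recentring the bump so its support stays in $B_1$---is exactly the same issue the paper handles by choosing $B_\eps(y)\subset B_1$ with $y$ near $x$, and your observation that this is the only place the geometry of $B_1$ enters is accurate.
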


\begin{proof}
Let us consider $R=1$ first.
Following the proof of classical interpolation inequalities for H\"older norms 
(see \cite[Lemma~6.35]{gilbarg2001elliptic} or \cite[Theorem~3.2.1]{krylov1996lectures}), 
one can derive
\begin{align}\label{cls.itpl}
[u]^E_{s;B_1} 
\le C\eps^{r-s} [u]^E_{r;B_1} + C \eps^{-s} |u|^E_{0;B_1}.
\end{align}
Consider the case of $r\le 1$ first.
For arbitrary $x\in B_1$, we select a ball $B_\eps(y) = \{z:|y-z|<\eps\} \subset B_1$ such that $x\in B_\eps(y)$.
Then we compute
\begin{align*}
\|u(x)\|_E 
& = \fint_{B_\eps(y)} \| u(x) \|_E \vd z \le \fint_{B_\eps(y)} \| u(x) -  u(z)\|_E\vd z 
+ \fint_{B_\eps(y)} \| u(z) \|_E \vd z \\
& \le C \eps^{r} [u]^E_{r;B_1} + C \eps^{-n/p} \|u\|_{L^p(B_1;E)},
\end{align*}
which yields \eqref{itpl.est} with $s=0$ and $r\le 1$.
For the case of $r>1$, we have
\begin{align*}
|u|^E_{0;B_1} 
& \le C\eps [u]^E_{1;B_1} + C \eps^{-n/p} \|u\|_{L^p(B_1;E)} \\
& \le C \eps\eps_1^{r-1} [u]^E_{r;B_1} +  C \eps\eps_1^{-1} |u|^E_{0;B_1}
+ C \eps^{-n/p} \|u\|_{L^p(B_1;E)}.
\end{align*}
Choosing $\eps_1 = 2C\eps$, we get \eqref{itpl.est} with $s=0$ and $r > 1$.
Finally, for the general case, we derive
\begin{align}\label{eq:B1}
|u|^E_{s;B_1} 
\le C\eps^{r-s} [u]^E_{r;B_1} +  C \eps^{-s} |u|^E_{0;B_1} 
\le C\eps^{r-s} [u]^E_{r;B_1} + C \eps^{-s-n/p} \|u\|_{L^p(B_1;E)}.
\end{align}
The case of $R=1$ is proved.

Now we turn to the general $R>0$.
With $v(x):=u(Rx)$ we have that $[v]^E_{r;B_1} = R^r [u]^E_{r;B_R}$
and $\|v\|_{L^p(B_1;E)} = R^{-n/p}\|u\|_{L^p(B_R;E)}$.
Applying \eqref{eq:B1} to $v$
we can obtain that
\begin{gather*}\label{itpl.est}
[u]^E_{s;B_R} \le C(R\eps)^{r-s} [u]^E_{r;B_R} + C (R\eps)^{-s-n/p} \|u\|_{L^p(B_R;E)}
\end{gather*}
for any $u\in C^r(B_R;E)$ and $\eps \in (0,1)$.
The proof is complete.
\end{proof}

We are now ready to prove Theorem~\ref{schauder1}.

\begin{proof}[Proof of Theorem~\ref{schauder1}]
Let $\rho/2\le r<R\le \rho$ with $\rho\in (0,1/4)$ to be specified.
Take a nonnegative function $\zeta\in C_0^\infty(\R^n)$ 
such that $\zeta(x)=1$ when $|x|\le r$; $\zeta(x)=0$ when $|x|>R$, and for $\delta \ge 0$,
\[
[\zeta]_{\delta;\R^n} \le C(R-r)^{-\delta}.
\]
Set $v=\zeta {u}$,
and
\[
\aij_{\circ}(t) = \aij(0,t),
\quad
\sik_{\circ}(t)=\sik(0,t).
\]
Then $v$ satisfies
\begin{equation}\label{cutoff}
\md v = (\aij_{\circ} v_\dij + \tilde{f})\vd t 
+ (\sik_\circ v_\di + \tilde{g}^k) \vd {\BM}^k_t,
\end{equation}
where
\begin{align*}
\tilde{f} = \,&  (\aij - \aij_{\circ})\zeta {u}_\dij
+ (\bi\zeta - 2\aij \zeta_j) {u}_\di + (c\zeta - \aij \zeta_\dij - \bi \zeta_i) {u}
+ \zeta {f},\\
\tilde{g}^k = \, &  (\sik - \sik_{\circ})\zeta {u}_\di
+ (\nu^k \zeta - \sik \zeta_i) {u} + \zeta {g}^k.
\end{align*}
For a positive number $\tau$, we set $\Q_{R,\tau} = B_R \x (0,\tau)$ and define
\[
{M}^\tau_{x,r}(u) 
= \sup_{0\le t \le \tau}
\biggl(\fint_{B_r(x)} \!\E\,|u(t,y)|^\p \vd y \biggr)^{1/\p},
\quad
{M}^\tau_{r}(u) = \sup_{x\in\R^n}{M}^\tau_{x,r}(u).
\]
Then by Lemma~\ref{interp},
\begin{align*}
\norm{\tilde{f}}_{\al;\Q_{R,\tau}}
& \le (\eps + K R^\al) \hnorm{{u}}_{2+\al;\Q_{R,\tau}}
+ C (R-r)^{-2-\al-n/\p} {M}^\tau_{0,R}(u) \\
& \quad + \hnorm{{f}}_{\al;\Q_{R,\tau}}
+ {C (R-r)^{-\al}} \norm{{f}}_{0;\Q_{R,\tau}}, \\
\norm{\tilde{g}}_{1+\al;\Q_{R,\tau}}
& \le (\eps + K R^\al) \hnorm{u}_{2+\al;\Q_{R,\tau}}
+ C (R-r)^{-2-\al-n/\p} {M}^\tau_{0,R}(u) \\
& \quad + \hnorm{{g}}_{1+\al;\Q_{R,\tau}}
+ {C (R-r)^{-1-\al}} \norm{{g}}_{0;\Q_{R,\tau}}.
\end{align*}
where $C=C(n,K,\p,\eps,\rho)$. 
Take $\rho,\eps>0$ so small that $\eps + K \rho^\al \le 1/4$,
then by virtue of Corollary~\ref{xholder},
we obtain that for any $\rho/2\le r < R \le \rho$,
\begin{align*}
\hnorm{{u}}_{(2+\al,\al/2);\Q_{r,\tau}}
& \le \frac12 \hnorm{{u}}_{2+\al;\Q_{R,\tau}} 
+ {C (R-r)^{-2-\al-n/\p}} {M}^\tau_{0,R}(u)
+ \hnorm{{f}}_{\al;\Q_{R,\tau}} \\
& \quad  + \hnorm{{g}}_{1+\al;\Q_{R,\tau}} 
+ {C (R-r)^{-\al}} \norm{{f}}_{0;\Q_{R,\tau}}
+ {C (R-r)^{-1-\al}} \norm{{g}}_{0;\Q_{R,\tau}}.
\end{align*}

By Lemma~\ref{iterat}, we gain
\begin{align}\label{pf404}
\hnorm{{u}}_{(2+\al,\al/2);\Q_{\rho/2,\tau}}
\le C \Bigl(  {M}^\tau_{0,\rho}(u)
+ \norm{{f}}_{\al;\Q_{\rho,\tau}} 
+ \norm{{g}}_{1+\al;\Q_{\rho,\tau}} \Bigr).
\end{align}
We can move the centre of domain to any point $x \in \R^n$, thus 
\begin{align*}
\sup_{x\in\R^n}\hnorm{{u}}_{(2+\al,\al/2);\Q_{\rho/2,\tau}(x)}
\le C \Bigl( {M}^\tau_{\rho}(u)
+ \norm{{f}}_{\al;\Q_{\tau}} 
+ \norm{{g}}_{1+\al;\Q_{\tau}} \Bigr),
\end{align*}
which along with the localization property of H\"older norms (cf. \cite[Lemma~4.1.1]{krylov1996lectures}) and Lemma~\ref{interp}, we have
\begin{align}\label{pf403}
\hnorm{{u}}_{(2+\al,\al/2);\Q_{\tau}}
& \le C \sup_{x\in\R^n} \Big( 
\hnorm{{u}}_{(2+\al,\al/2);\Q_{\rho/2,\tau}(x)} 
+ \norm{{u}}_{0;\Q_{\rho/2,\tau}(x)}
\Big) \\
& \le C \Big( \sup_{x\in\R^n}
\hnorm{{u}}_{(2+\al,\al/2);\Q_{\rho/2,\tau}(x)} + {M}^\tau_{\rho/2}(u)
\Big) \nonumber \\
& \le C
\Bigl( {M}^\tau_{\rho}(u)
+ \norm{{f}}_{\al;\Q_{\tau}} 
+ \norm{{g}}_{1+\al;\Q_{\tau}} \Bigr), \nonumber
\end{align}
where $C = C(n,\pc,\p,\al)$ is a generic constant.

To estimate ${M}_\rho^{\tau}(u)$,
we apply It\^o's formula to compute
\begin{align*}
\md |{u}|^\p & = \p |{u}|^{\p-2} {u} (\aij {u}_\dij 
+ \bi {u}_\di + c {u} + {f})\vd t \\
& \quad + \frac{\p(\p-1)}{2} |{u}|^{\p-2} |\sik {u}_\di + \nu^k {u} + {g}^k|^2 \vd t
+ \md m_t,
\end{align*}
where $m_t$ is a martingale.
Integrating in $\Q_{\rho,\tau}\x\PS$,
and using the H\"older and Sobolev--Gagliargo--Nirenberg inequalities, we get
\begin{align*}
\E\int_{B_{\rho}}\! |u(x,\tau)|^\p \vd x \le 
C_1 \E \int_{\Q_{\rho,\tau}}\! \Bigl( |{u}_{xx}|^\p
+ |{u}|^\p + |{f}|^\p + |{g}|^\p \Bigr) \vd x \md t.
\end{align*}
Thus,
\begin{align*}
{M}^\tau_{0,\rho}(u) \le
C_1 \tau (\norm{u}_{2;\Q_{\rho,\tau}} + \norm{{f}}_{0;\Q_{\tau}} 
+ \norm{{g}}_{0;\Q_{\tau}}),
\end{align*}
where $C_1 = C_1(n,\pc,\p)$.
Letting $\tau = (2CC_1)^{-1}$,
the above inequality along with \eqref{pf403} yields 
\begin{align}\label{tau}
\norm{{u}}_{(2+\al,\al/2);\Q_{\tau}}
\le C_0 \bigl( \norm{{f}}_{\al;\Q_{\tau}} 
+ \norm{{g}}_{1+\al;\Q_{\tau}} \bigr),
\end{align}
where $C_0 = C_0(n,\pc,\p,\al)$.

Let us conclude the proof by induction.
For $S>0$, assume that there is a constant $C_S$ such that 
\begin{align}\label{timeS}
\norm{{u}}_{(2+\al,\al/2);\Q_{S}}
\le C_S \bigl( \norm{{f}}_{\al;\Q_{S}} 
+ \norm{{g}}_{1+\al;\Q_{S}} \bigr).
\end{align}
With $u^S := u(\cdot,S)$, it is easily seen that $v := (u - u^S)$ satisfies 
\begin{equation*}
\begin{split}
& \md v = [\aij v_\dij + \bi v_\di + c v 
+ ({f} + \aij u^S_\dij + \bi u^S_\di + cu^S)]\vd t \\
& \qquad + [\sik v_\di + \nu^k v 
+ ({g}^k + \sik u^S_\di + \nu^k u^S)] \vd \BM^k_t,
\quad \text{on } \R^n\x(S,\infty),\\
& v(S,x) = 0, \qquad x\in \R^n.
\end{split}
\end{equation*}
Applying \eqref{tau} to this equation and with \eqref{timeS} in mind, 
we have
\begin{align*}
\norm{u}_{(2+\al,\al/2);\Q_{S+\tau}}
& \le \norm{v}_{(2+\al,\al/2);\Q_{S+\tau}} + \norm{u}_{(2+\al,\al/2);\Q_{S}} \\
& \le C_0 \bigl( \norm{{f}}_{\al;\Q_{S+\tau}} 
+ \norm{{g}}_{1+\al;\Q_{S+\tau}} \bigr) +  C_0 N \norm{u}_{(2+\al,\al/2);\Q_{S}} \\
& \le C_0 (1 + N C_S) \bigl( \norm{{f}}_{\al;\Q_{S+\tau}} 
+ \norm{{g}}_{1+\al;\Q_{S+\tau}} \bigr), 
\end{align*}
where $N$ is a constant depending only on $n,\pc,\p$ and $K$.
Hence,
\[
C_{S+\tau} \le C_0 (1 + N C_S).
\]
As $\tau$ is fixed, by iteration we have
$C_S \le C\me^{CS}$, where $C = C(n,\pc,\p,\al,K)$.
This concludes the proof of estimate \eqref{schauder.est} and thus Theorem~\ref{schauder1}.
\end{proof}

\appendix

\setcounter{theorem}{0}
\renewcommand{\thetheorem}{A.\arabic{theorem}}
\setcounter{remark}{0}
\renewcommand{\theremark}{A.\arabic{remark}}
\setcounter{equation}{0}
\renewcommand{\theequation}{A.\arabic{equation}}

\section*{Appendix}

In this section we prove some properties and approximation of $\bL^\p$-valued continuous and differentiable functions.
Let $\Dom$ be a simply connected domain in $\R^n$.
Denote $C(\Dom;\bL^\p)$ the set of all $\bL^\p$-valued strongly continuous functions defined on $\Dom$ such that $\sup_{x\in\Dom}\E[|u(x)|^\p]<\infty$,
and $C^m(\Dom;\bL^\p)$ the set of all $C(\Dom;\bL^\p)$ functions whose strong derivatives up to order $m$ all exit and belong to $C(\Dom;\bL^\p)$, where $m$ is a nonnegative integer. 
In view of a known result (see \cite[Proposition~3.6]{da1992stochastic}), every function in $C(\Dom;\bL^\p)$ has a modification jointly measurable with respect to $x\in\Dom$ and $\om\in\PS$; we will always choose this modification.

In what follows, we denote $D u$ to the strong derivatives of an $\bL^\p$-valued differentiable function $u$, and $\partial u$ to be the classical derivatives if exist.

\begin{lemma}\label{lemA1}
If $u\in L^{\p}(\PS;\Ct^m(\Dom))$, then $u\in\Ct^m(\Dom;\bL^\p)$, and
$D^\beta u = \partial^\beta u$ for any multi-index $\be$ with $|\be|\le m$.
\end{lemma}

\begin{proof}
It follows from the dominated convergence theorem that
$L^{\p}(\PS;\Ct(\Dom)) \subset \Ct(\Dom;\bL^\p)$.
For $u\in L^{\p}(\PS;\Ct^1(\Dom))$,
we know that $\partial u \in \Ct(\Dom;\bL^\p)$, and by Jensen's inequality and Fubini's theorem,
\begin{align*}
& \E | r^{-1}[u(x+re_i) - u(x)] - \partial_i u(x) |^\p 
= \E \left| \int_0^1 \big[\partial_i u (x+sre_i) - \partial_i u(x) \big] \vd s \right|^\p \\
& \le \int_0^1 \E \big| \partial_i u (x+sre_i) - \partial_i u(x) \big|^\p \vd s 
 \to 0
\quad
\text{as }
r \to 0. 
\end{align*}
Thus, $D u= \partial u$ and $u\in\Ct^1(\Dom;\bL^\p)$.
The lemma is concluded by induction.
\end{proof}

Let ${\vf} = \tilde{\vf}/ \int_{\R^n} \tilde{\vf}$ with $\tilde{\vf}(x) := \me^{-1/(1-|x|^2)} \bm{1}_{B_1}\!(x)$ for $x\in\R^n$.
Define 
$\vf^\eps = \eps^{-n} \vf(x/\eps)$ with $\eps>0$.
It is easily seen that
\begin{gather*}
|D^m\vf^\eps|\le C \eps^{-n-m}, 
\quad
\int_{\R^n} |D^m\vf^\eps|^\p \le C\,\eps^{-(\p-1)n-\p m}
\end{gather*}
for all $m\in\Nat$ and $\p\ge 2$.
Now we mollify a function $u\in \Ct(\R^n;\bL^\p)$ using $\vf^\eps$:
\begin{equation}\label{mollify}
u^\eps := \vf^\eps * u = \int \vf^\eps(\cdot - y) u(y) \vd y.
\end{equation}
It is easily seen from Fubini's theorem that $u^\eps\in \Ct(\R^n;\bL^\p)$.
In view of Theorem~34.B in \cite{halmos1974measure},
$u(x,\om)$ is a measurable function in $x$ for each $\om$, 
and $u(\cdot,\om) \in L^\p_\loc(\R^n)$ for almost every $\om$.
Thus, by the property of mollifiers, $u^{\eps}(\cdot,\om) \in \Ct^\infty(\R^n)$ for almost every $\om$.

\begin{lemma}\label{lemA2}
If $u\in \Ct(\R^n;\bL^\p)$, then $u^\eps \in \bigcap_{m\in\Nat} \Ct^m(\R^n;\bL^\p)$, and it restricted on any $B_R$ belongs to $L^{\p}(\PS;\Ct^k(B_R))$ for all $k\in \Nat$.
\end{lemma}

\begin{proof}
By H\"older's inequality and Fubini's theorem we have
\begin{align*}
& \E |\partial^m u^\eps(x)|^\p 
= \E\left|\int_{B_\eps(x)} \partial^m\vf^\eps(x - y) u(y) \vd y\right|^\p\\
& \le |B_\eps|^{\p-1}\int_{B_\eps} |\partial^m\vf^\eps(y)|^\p  \vd y \Big(\sup_{x\in\R^n} \E|u(x)|^\p\Big)
\le C \eps^{-p(m+1)} \norm{u}_{0;\R^n}^\p,
\end{align*}
which implies that $u^\eps \in \bigcap_{m\in\Nat} \Ct^m(\R^n;\bL^\p)$ and also that $u^\eps\in L^{\p}(\PS;W^{k,\p}_\loc(\R^n))$ for any $k\in\Nat$. 
By Sobolev's embedding theorem, $u^\eps\in L^{\p}(\PS;C^{k}(B_R))$ for any $k\in\Nat$ and $R>0$.
\end{proof}

\begin{remark}\label{remA1}
If $u$ also depends on time, say $u\in \Ct(\R^n\times\R;\bL^\p)$, then the above lemma implies that $u^\eps(\cdot,t)\in L^{\p}(\PS;\Ct^k(B_R))$ and
$\sup_{t\in [-R,R]} \E |u^\eps|_{k;B_R}^\p < \infty$ for all $k\in\Nat$ and $R>0$.
In particular, $u^\eps \in L^\p_\om L^2_t H^k_x (Q_R)\cap C^k_x(Q_R;\bL^\p)$ for all $k\in\Nat$ and $R>0$.
\end{remark}

\begin{lemma}\label{lemA3}
If $u\in \Ct(\R^n;\bL^\p)$, then
\begin{equation}\label{conv}
\lim_{\eps\to 0} \E [ |u^\eps(x) - u(x)|^\p] = 0
\quad \forall\,x\in\R^n;
\end{equation}
if, in addition, $u$ is uniformly strongly continuous, namely
\[
\lim_{\eps\to 0}\sup_{x\in\R^n}\max_{|y|\le\eps} \E[| u(x+y)- u(x)|^\p] = 0,
\]
then the convergence \eqref{conv}
 is uniform with respect to $x\in\R^n$.
\end{lemma}

\begin{proof}
Using the continuity of $u$ we have
\begin{align*}
\E |u^\eps(x) & -  u(x)|^\p 
 = \E\left|\int_{B_\eps(x)} \vf^\eps(x - y) [ u(y) - u(x)] \vd y\right|^\p\\
& \le |B_\eps|^{\p-1}\int_{B_\eps} |\vf^\eps(y)|^\p \vd y 
\, \Big(\max_{|y|\le\eps} \E| u(x+y) -  u(x)|^\p\Big) \\
& \le C\max_{|y|\le\eps} \E| u(x+y)- u(x)|^\p \to 0
\quad
\text{as}~ \eps \to 0.
\end{align*}
Then the lemma is easily concluded.
\end{proof}

\begin{lemma}\label{lemA4}
If $u\in \Ct^1(\R^n;\bL^\p)$,
then $u^\eps \in \Ct^1(\R^n;\bL^\p)$
and $Du^\eps = \vf^\eps * Du$.
\end{lemma}

\begin{proof}
We compute that
\begin{align*}
& \E \left| r^{-1}[u^\eps(x+re_i) - u^\eps(x)] - \vf^\eps * D_i u(x) \right|^\p \\
& = \E\left|\int_{B_\eps(x)}  \vf^\eps(x - y) \left(\frac{u(y+re_i) - u(y)}{r} - D_i u(y)\right) \vd y\right|^\p\\
& \le \int_{B_\eps(x)} \int_0^1 \vf^\eps(x - y) \,\E\big| D_i u (y+sre_i) - D_i u(y) \big|^\p \vd s \md y.
\end{align*}
Since $\lim_{r\to 0} \E\big| D_i u (y+sre_i) - D_i u(y) \big|^\p
=0$ and  
$ \E\big| D_i u (y+sre_i) - D_i u(y) \big|^\p
\le 2^\p \norm{u}_{1;\R^n}^\p$,
the lemma is concluded by the dominated convergence theorem.
\end{proof}

The following consequence is straightforward.

\begin{corollary}\label{corA5}
If $u\in \Ct^{m}(\R^n;\bL^\p)$,
then $D^m u^\eps(x)$
converges to $D^m u(x)$ in $\bL^\p$ for each $x\in\R^n$, as $\eps$ tends to zero;
if, in addition, $D^m u$ is uniformly strongly continuous, then the convergence is uniform with respect to $x\in\R^n$.
\end{corollary}

The final lemma concerns the convergence of H\"older norms.

\begin{lemma}\label{lemA1.5}
If $u\in \Ct^\al(\R^n;\bL^\p)$ with $\al\in(0,1)$, then $u^\eps \in\Ct^\al(\R^n;\bL^\p)$ satisfying $\hnorm{u^\eps}_{\alpha;\R^n} \le C(n,\p)\hnorm{u}_{\alpha;\R^n}$
and $\lim_{\eps\to 0}\hnorm{u^\eps - u}_{\alpha/2;\R^n}=0$.
\end{lemma}

\begin{proof}
By H\"older's inequality and Fubini's theorem we have
\begin{align*}
& \E |[u^\eps(x) - u^\eps(x')|^\p 
 = \E\left|\int_{B_\eps} \vf^\eps(y) [u(x-y)-u(x'-y)] \vd y\right|^\p\\
& \le |B_\eps|^{\p-1}\int_{B_\eps} |\vf^\eps(y)|^\p  \vd y \Big(\sup_{y\in B_\eps} \E|u(x-y)-u(x'-y)|^\p\Big)
\le C \hnorm{u}_{\alpha;\R^n}^\p |x-x'|^{\al\p},
\end{align*}
thus $\hnorm{u^\eps}_{\alpha;\R^n} \le C\hnorm{u}_{\alpha;\R^n}$. 
Furthermore, by the definition of H\"older norms we can easily seen that 
\[
\hnorm{u^\eps - u}_{\alpha/2;\R^n}^2 \le \hnorm{u^\eps - u}_{\alpha;\R^n}
\norm{u^\eps - u}_{0;\R^n}\le C  \hnorm{u}_{\alpha;\R^n} \norm{u^\eps - u}_{0;\R^n},
\]
so the proof is concluded by Lemma~\ref{lemA3}. 
\end{proof}

\bibliographystyle{amsalpha}
\small{
\bibliography{ref_spde}
}

\end{document}